\newtheorem{theorem}{Theorem}[section]
\newtheorem{corollary}[theorem]{Corollary}
\newtheorem{lemma}[theorem]{Lemma}
\newtheorem{proposition}[theorem]{Proposition}
\newtheorem{example}[theorem]{Example}
\theoremstyle{definition}
\newtheorem{algorithm}[theorem]{Algorithm}
\newtheorem{assumption}{Assumption}
\newcommand{\R}{\mathbb{R}}
\newcommand{\N}{\mathbb{N}}
\DeclareMathOperator{\proj}{proj}
\DeclareMathOperator{\Div}{div}
\def\dx{\,\mathrm{d}x}
\def\dt{\,\mathrm{d}t}
\numberwithin{equation}{section}
\newcommand\BVSN[1]{\lvert #1 \rvert_{BV(\Omega)}}
\title{A penalty scheme to solve constrained non-convex optimization problems in $BV(\Omega)$}
\author{Carolin Natemeyer, Daniel Wachsmuth
\footnote{Institut f\"ur Mathematik,
Universit\"at W\"urzburg,
97074 W\"urzburg, Germany, {\tt carolin.natemeyer@mathematik.uni-wuerzburg.de, daniel.wachsmuth@mathematik.uni-wuerzburg.de}.
This research was partially supported by the German Research Foundation DFG under project grant Wa 3626/3-2.}}
\begin{document}
	\maketitle

{\bfseries Abstract.} We investigate non-convex optimization problems in 	$BV(\Omega)$ with two-sided pointwise inequality constraints.
We propose a regularization and penalization method to numerically solve the problem.
Under certain conditions, weak limit points of iterates are stationary for the original problem.
In addition, we prove optimality conditions for the original problem that contain Lagrange multipliers to the
inequality constraints.
Numerical experiments confirm the theoretical findings.

{\bfseries Keywords.} Bounded variation, inequality constraints, optimality conditions, Lagrange multipliers, regularization scheme.

{\bfseries MSC 2020 classification.}
49K30, 
49M05, 
65K10. 

\section{Introduction}

Let $\Omega\in\R^n$ be an open bounded set
with Lipschitz boundary.
We consider the possibly non-convex optimization problem of the form
\begin{equation} \label{P_BV}
\min\limits_{u\in U_{ad} \cap BV(\Omega)}f(u)+ \BVSN{u}.
\end{equation}
Mostly, we will work with
\begin{equation} \label{eq:def_Uad}
U_{ad}=\{u\in BV(\Omega):\:u_a \le u(x)\le u_b \:\text{ f.a.a. } x\in \Omega\},
\end{equation}
where $u_a,u_b \in \R$.
The function space setting is $BV(\Omega)$, i.e., the space of functions of bounded variation that consists of $L^1(\Omega)$-functions with weak derivative in the Banach space $\mathcal{M}(\Omega)$ of real Borel measures on $\Omega$.
The term $\BVSN{u}$ denotes the $BV(\Omega)$-seminorm, which is equal to the total variation of the measure $\nabla u$, i.e., $\BVSN{u}=|\nabla u|(\Omega)$.
The functional $f:L^2( \Omega)\to\R$ is assumed to be smooth and can be non-convex. In particular, we have  in mind to choose $f(u):=f(y(u))$ as the reduced smooth part of an optimal control problem, incorporating the state equation.
We will give more details on the assumptions on the optimal control problem in Section \ref{sec:2}.
Problem \eqref{P_BV} is solvable, and existence of solutions to \eqref{P_BV} can be shown by the direct method of the calculus of variations, see Theorem \ref{existece_sol0}.

The purpose of the paper is two-fold:
\begin{enumerate}[label=(\Alph{enumi})]
  \item \label{purpose_A}
 We prove optimality conditions for \eqref{P_BV}--\eqref{eq:def_Uad} that contain Lagrange multipliers to the inequality constraints $u_a \le u$ and $u\le u_b$.
  Moreover, these multipliers belong to $L^2(\Omega)$.

 \item We investigate an algorithmic scheme to solve \eqref{P_BV}--\eqref{eq:def_Uad}, where weak limit points of iterates satisfy the optimality conditions from \ref{purpose_A}.
\end{enumerate}

Both of these goals rely on the same approximation method.
The algorithmic scheme consists of the following two parts.
First, we approximate the non-differentiable total variation term by a smooth approximation and apply a continuation strategy.
Second, we address the box constraints with a classical penalty method.
Of course, solutions to \eqref{P_BV} and appearing subproblems are not unique due to the lack of convexity, which makes the analysis  challenging.
In general, only stationary points of these non-convex problems can be computed.
Under suitable assumptions,
limit points of the generated sequences of stationary points of the subproblems and of the associated multipliers satisfy a certain necessary optimality condition for the original problem.

In addition, we apply this regularization and penalization approach to local minima of the original problem.
This allows us to prove optimality conditions that contains Lagrange multipliers to the inequality constraints, see Theorem \ref{thm:final_local}.
Such a result is not available in the literature.
Admittedly, we have to make the assumption that the constraints $u_a$ and $u_b$ are constant functions.

Regularization by total variation is nowadays a standard tool in image analysis.
Following the seminal contribution \cite{TV_rudin_1992}, much research was devoted to study
such kind of optimization problems. We refer to \cite{chambolle_TV,overview_TV_2006}
for a general introduction and an overview on total variation in image analysis.
Optimal control problems in $BV$-spaces were studied for instance in \cite{CasasKunisch99_BV,CasasKunisch_ocsl_BV,Casas_TV_papablic17}.
These control problems are subject to semilinear equations, which results in non-convex control problems.
Finite element discretization and convergence analysis for optimization problems in $BV(\Omega)$ were investigated for instance in \cite{CasasKunisch99_BV,Bartels_TV_12,ClasonKunisch11}.
An extensive comparisons of algorithms to solve \eqref{P_BV} with the choice $f(u):=\frac12\|u-g\|_{L^2(\Omega)}$ can be found in \cite{Milicevic17}, see also \cite{MilicevicDiss}.
In \cite{SchevenSchmidt2018}, the one-sided obstacle problem in $BV(\Omega)$ is analyzed under low regularity requirements on the obstacle.
Interestingly, we could not find any results, where the existence of Lagrange multipliers to the inequality constraints in $BV(\Omega)$ is addressed.

One natural idea to regularize \eqref{P_BV} is to replace the non-differentiable $BV$-seminorm by a smooth approximation.
This was introduced in the image processing setting in \cite{Acar94_BV_approx} with the functional
\[
u\mapsto\int_\Omega\sqrt{\epsilon+|\nabla u|^2}\dx,
\]
which is widely used in the literature. Our regularization method is similar, with the exception that our functional guarantees existence of solutions in $H^1(\Omega)$.
A similar scheme was employed in the recent work \cite{hafemeyer2020}, where a path-following inexact Newton method for a convex PDE-constrained optimal control problem in $BV(\Omega)$ is studied.

	Let us comment on the structure of this work.
	In Section \ref{sec:2} we give a brief introduction to the function space $BV(\Omega)$ and recall some useful facts.
	Furthermore, we prove existence of solutions and a necessary first-order optimality condition for the optimization problem \eqref{P_BV}.
	In Section \ref{sec3}, we introduce the regularization scheme for \eqref{P_BV} and show that limit points of the suggested smoothing and penalty method
	satisfy a stationary condition for the original problem, see Theorem \ref{thm:final}.
	In Section \ref{sec4},
	we apply the regularization scheme to derive an optimality condition for locally optimal solutions of \eqref{P_BV}, see Theorem \ref{thm:final_local}.
	These conditions are	stronger than
	the conditions proven in Section \ref{sec:2}, since they
	contain Lagrange multipliers to the inequality constraints.
	Finally, we provide numerical results and details regarding the implementation of the method in Section \ref{sec5}.

\section{Preliminaries and Background} \label{sec:2}
In this section we want to  provide some definitions and results regarding the mathematical background of the paper.  For details we refer also to,  e.g.,
\cite{Attouch2006,ClasonKunisch11, CasasKunisch_ocsl_BV,CasasKunisch99_BV}.
First, let us  recall that $\mathcal{M}(\Omega)$ is the dual space of $C_0(\Omega)$. The noram of a measure $\mu\in \mathcal{M}(\Omega)$ is given by
\[
\|\mu\|_{\mathcal{M}(\Omega)}=\sup\left\{\int_\Omega zd\mu\: :\: z\in C_0(\Omega),\ |z(x)|\le1\:\ \forall x\in\Omega\right\}.
\]
The space of functions of bounded variation  $BV(\Omega)$ is a non-reflexive Banach space when endowed with the norm
\[
\|u\|_{BV(\Omega)}=\|u\|_{L^1(\Omega)}+\BVSN{u},
\]
where we define the total variation of $\nabla u$ by
\begin{equation}\label{def:total_var}
\BVSN{u}:=\sup\left\{\int_\Omega u\Div\varphi\dx:\:\varphi\in C^\infty_0(\Omega)^n,\:\ |\varphi(x)|\leq1\ \forall x\in\Omega \right\} =\|\nabla u\|_{\mathcal M(\Omega)^n}.
\end{equation}
Here, $|\cdot|$ denotes the Euclidean norm on $\R^n$.
In the definition \eqref{def:total_var}, $\nabla:BV(\Omega)\to\mathcal{M}(\Omega)^n$ is a linear and continuous map.
Functions in $BV(\Omega)$  are not necessarily continuous, as an example we mention the characteristic functions of a set with sufficient regularity.
If  $u\in W^{1,1}(\Omega)$, then $\|u\|_{BV(\Omega)}=\|u\|_{W^{1,1}(\Omega)}$ and $\BVSN{u}=\|\nabla u\|_{L^1(\Omega)}.$

The Banach space $BV(\Omega)$ and $\BVSN{\cdot}$ have some useful properties, which are recalled in the following.

\begin{proposition}\label{prop:BV}
Let $\Omega\subset \R^n$ be an open bounded set with Lipschitz boundary.
	\begin{enumerate}
	\item
The space  $BV(\Omega)$ is continuously embedded in $L^r(\Omega)$ for  $1\le r\le\frac{n}{n-1}$,  while the embedding
is compact for $1\le r<\frac{n}{n-1}$.
		\item Let $(u_k)\subset BV(\Omega)$
		be bounded in $BV(\Omega)$
		with $u_k\to u$ in $L^1(\Omega)$. Then
		\[
		\BVSN{u}\leq \liminf\limits_{k\to\infty}\BVSN{u_k}
		\]
		holds.
		\item
			For $u\in BV(\Omega)\cap L^p(\Omega)$: $p\in[1,\infty)$, there is a sequence $(u_k)\subset C^\infty(\bar \Omega)$ such that
		\begin{equation} \label{eq_intermed_conv}
		u_k\to u \text{ in } L^p(\Omega) \text{ and } \BVSN{u_k}\to\BVSN{u},
		\end{equation}
		that is, $C^\infty(\bar \Omega)$ is dense in $BV(\Omega)\cap L^p(\Omega)$ with respect to the intermediate convergence \eqref{eq_intermed_conv}.
	\end{enumerate}
\end{proposition}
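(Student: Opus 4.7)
The three assertions are classical structural properties of $BV(\Omega)$, and the cleanest route is to prove them in the order (2), (3), (1), since the embedding in (1) rests on the smooth approximation produced in (3).

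For assertion (2), my plan is to exploit the variational characterization \eqref{def:total_var} directly. Fix $\varphi\in C_0^\infty(\Omega)^n$ with $|\varphi(x)|\le 1$; since $\Div\varphi\in L^\infty(\Omega)$ and $u_k\to u$ in $L^1(\Omega)$, one has $\int_\Omega u_k\,\Div\varphi\dx \to \int_\Omega u\,\Div\varphi\dx$. Each integral on the left is bounded by $\BVSN{u_k}$, so passing to the inferior limit and then taking the supremum over admissible $\varphi$ yields the claim.

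For assertion (3), I would implement a Meyers--Serrin type mollification argument, and this is the step I expect to be the main obstacle. Exhaust $\Omega$ by open sets $\Omega_j\Subset\Omega_{j+1}$ and choose a subordinate $C_0^\infty$ partition of unity $\{\eta_j\}$. For a prescribed tolerance $\delta>0$, pick mollification radii $\epsilon_j>0$ small enough that $u_\delta:=\sum_j\rho_{\epsilon_j}\ast(\eta_j u)$ lies in $C^\infty(\Omega)$ with $\|u_\delta-u\|_{L^p(\Omega)}\le \delta$ and such that each commutator $\rho_{\epsilon_j}\ast(u\,\nabla\eta_j)-u\,\nabla\eta_j$ has $L^1(\Omega)^n$-norm at most $2^{-j}\delta$. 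The delicate point is the gradient bookkeeping: differentiating term by term, the product rule produces both an approximation of $\eta_j\,\nabla u$ (which, summed, recovers $\nabla u$ up to $O(\delta)$ by the partition of unity) and the spurious terms $u\,\nabla\eta_j$; the cancellation $\sum_j\nabla\eta_j\equiv 0$, i.e. $\sum_j u\,\nabla\eta_j=0$, is precisely what forces $\BVSN{u_\delta}\le\BVSN{u}+\delta$, since only the commutator contributions survive. Combining with assertion (2), which gives the opposite inequality in the limit, yields $\BVSN{u_\delta}\to\BVSN{u}$. A reflection-type extension across the Lipschitz boundary finally upgrades the approximation to one in $C^\infty(\bar\Omega)$.

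For assertion (1), I would start from the Gagliardo--Nirenberg--Sobolev inequality on Lipschitz domains, namely $\|v\|_{L^{n/(n-1)}(\Omega)}\le C(\|v\|_{L^1(\Omega)}+\|\nabla v\|_{L^1(\Omega)^n})$ for $v\in W^{1,1}(\Omega)$. Applying this to the smooth approximants $u_k\in C^\infty(\bar\Omega)$ produced in (3) with $p=1$, I obtain a sequence uniformly bounded in $L^{n/(n-1)}(\Omega)$; Banach--Alaoglu together with the $L^1$-convergence $u_k\to u$ identifies the weak-$*$ limit as $u$ and delivers $\|u\|_{L^{n/(n-1)}}\le C\|u\|_{BV(\Omega)}$, proving the continuous embedding. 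Compactness for $r<n/(n-1)$ then follows from the interpolation $\|v\|_{L^r}\le \|v\|_{L^1}^{\theta}\|v\|_{L^{n/(n-1)}}^{1-\theta}$ with $\theta\in(0,1]$, combined with the compactness $BV(\Omega)\hookrightarrow L^1(\Omega)$, which I would obtain by a diagonal extraction applied to the $C^\infty(\bar\Omega)$ approximants and Rellich--Kondrachov for $W^{1,1}(\Omega)$.
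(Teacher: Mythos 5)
Your proposal is correct, but it takes a genuinely different route from the paper, because the paper proves essentially nothing in-house: assertion (1) is cited from \cite{Attouch2006} (Thms.~10.1.3 and 10.1.4), assertion (2) from \cite{Attouch2006} (Prop.~10.1.1), and assertion (3) is dispatched with the remark that the standard mollification proof of the case $p=1$ (Thm.~10.1.2 there) goes through after replacing $L^1$-norms by $L^p$-norms. What you have written out is, in substance, a self-contained reconstruction of those cited results, in a sound and non-circular order: the duality argument for (2) (which, correctly, never uses the assumed $BV$-boundedness); the Meyers--Serrin partition-of-unity mollification for (3), with the key cancellation $\sum_j u\nabla\eta_j=0$ identified and with (2) supplying the lower bound $\BVSN{u}\le\liminf\BVSN{u_\delta}$; and for (1) the Gagliardo--Nirenberg--Sobolev inequality applied to the approximants from (3), identification of the weak-$*$ limit via the $L^1$-convergence, and interpolation combined with compactness of $BV(\Omega)\hookrightarrow L^1(\Omega)$ for the range $r<\frac{n}{n-1}$. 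The paper's approach buys brevity; yours buys a proof readable without the reference. The one place where you are noticeably thinner than the cited source is the step you compress into a single clause: upgrading from $C^\infty(\Omega)$ to $C^\infty(\bar\Omega)$. This is precisely where the Lipschitz regularity of $\partial\Omega$ enters: one needs an extension $Eu\in BV(\R^n)\cap L^p(\R^n)$ of $u$ whose gradient measure does not charge the boundary, $|D(Eu)|(\partial\Omega)=0$; mollifying $Eu$ then yields functions in $C^\infty(\bar\Omega)$ whose total variations over $\Omega$ are bounded above by $|D(Eu)|$ of a shrinking neighborhood of $\bar\Omega$, which converges to $|D(Eu)|(\bar\Omega)=\BVSN{u}$ by outer regularity, while (2) gives the matching lower bound. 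Without the no-charge property the reflected extension could deposit mass on $\partial\Omega$ and the upper bound would fail, so this property deserves explicit proof. Note also that once such an extension is available, mollifying $Eu$ directly proves (3) in one stroke, so your interior Meyers--Serrin construction, while correct, becomes logically redundant.
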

\begin{proof}
(1) is \cite[Thm. 10.1.3, 10.1.4]{Attouch2006}.
(2) is \cite[Prop. 10.1.1(1)]{Attouch2006}.
 (3) Can be proven analogously to \cite[Theorem 10.1.2]{Attouch2006}, which contains the case $p=1$.
 A proof for $p\in (1,\infty)$ can be obtained by replacing $L^1$-norms by $L^p$-norms in the proof by \cite{Attouch2006},
 which is by a standard mollification procedure.
\end{proof}

\paragraph{Notation.}
Frequently, we will use the following standard notations from convex analysis.
The indicator function of a convex set $C$ is denoted by $\delta_C$.
The normal cone of a convex set $C$ at a point $x$ is denoted by
$N_C(x)$,  and $\partial h$ denotes the convex subdifferential of a convex function $h$.
It is well known that $\partial\delta_C(x)=N_C(x)$ holds for convex sets $C$.
Moreover, we introduce the notation
\[
J(u):= f(u)+\BVSN{u},
\]
which will be used thoughout the paper.
In addition, we will denote the positive and negative part of $x\in \R$ by $(x)_+:=\max(x,0)$ and $(x)_- := \min(x,0)$.

\subsection{Standing assumption}

In order to prove existence of solutions of  \eqref{P_BV} and to analyze the regularization scheme later on, we need some assumptions on the ingredients of the optimal control problem \eqref{P_BV}.
Let us start with collecting those in the following paragraph.
\begin{assumption} \label{ass:A}
\phantom{bla}
	\begin{enumerate}[label=(A\arabic{enumi})]
		\item \label{ass:A1} $f:L^2(\Omega)\to \R$  is bounded from below and weakly lower semicontinuous.
		\item \label{ass:A2} $f + \delta_{U_{ad}}$ is weakly coercive in $L^2(\Omega)$, i.e.,
		for all sequences $(u_k)$ with $u_k\in U_{ad}$ and $\|u_k\|_{L^2(\Omega)} \to \infty$ it follows $f(u_k)\to +\infty$.
		\item \label{ass:A3}
		$U_{ad}$ is a convex and closed subset of $L^2(\Omega)$ with $U_{ad} \cap BV(\Omega) \ne \emptyset$.

		\item \label{ass:A4} $f:L^2(\Omega)\to \R$ is continuously Fréchet differentiable.

		\item \label{ass:A5}
		$U_{ad}:=\{u\in L^2(\Omega):\:u_a\le u(x)\le u_b\:\text{ f.a.a. } x\in \Omega\}$ with $u_a,u_b\in \R$ and $u_a<u_b$.

	\end{enumerate}

\end{assumption}

Here, assumptions \ref{ass:A1}--\ref{ass:A3} will be used to prove existence of solutions of \eqref{P_BV}.
Condition \ref{ass:A4} is necessary to derive necessary optimality conditions.
The assumption \ref{ass:A5} will be used in Section \ref{sec3} to prove boundedness of Lagrange multipliers
associated to the inequality constraints in $U_{ad}$.

\begin{example}
We consider
\[
f(u):= \int_\Omega L(x, y_u(x))\dx,
\]
where $y_u\in H^1_0(\Omega)$ is defined as the unique weak solution to the elliptic partial differential equation
\[
(Ay)(x)+d(x,y(x)) = u(x)\quad \text{ a.e. in } \Omega.
\]
Let us assume that $A$ is an uniformly elliptic operator with bounded coefficients and $L,d$ are Carath{\'e}odory functions, continuously differentiable with respect to $y$ such that derivatives are bounded on bounded sets and that $d$ is monotonically increasing with respect to $y$. Then it is well known that $f$ is covered by Assumption \ref{ass:A}, see for instance \cite{CasasL12012}.

\end{example}
\begin{example}
Another example is given by the functional
\[
f(u):=\int_\Omega\int_I L(x,t,y_u(x,t))\dx\dt.
\]
with $y_u\in L^2(I,H^1_0(\Omega)),\: I:=(0,T),\:T>0,\: \Omega\subset\R^n,$
 as the solution of the parabolic equation
 \[
 \partial_ty(x,t) +(Ay)(x,t) +d(x,t,y(x,t)) = u(x,t) \quad\text{a.e. in }\Omega\times I.
 \]
 Assuming again an uniformly elliptic operator $A$ and measurable functions $L,d$ of class $C^2$ w.r.t. $y$ with bounded derivatives, such that $d$ is monotonically increasing, the functional $f$ satisfies Assumption \ref{ass:A}.
 We refer to \cite[Chapter 5]{BOOK_troeltzsch}, \cite{Casas_TV_papablic17}.

\end{example}

\subsection{Existence of solutions of \eqref{P_BV}}

Next, we show that under suitable assumptions on the function $f$, Problem \eqref{P_BV} has at least one solution in $L^2(\Omega)\cap BV(\Omega)$.

\begin{theorem}\label{existece_sol0}
	Let Assumptions \ref{ass:A1}--\ref{ass:A3} be satisfied. Then \eqref{P_BV} has a solution $u\in L^2(\Omega)\cap BV(\Omega)$.
\end{theorem}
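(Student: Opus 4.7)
The plan is the direct method of the calculus of variations, built on the three pieces in Proposition \ref{prop:BV} and the three structural hypotheses \ref{ass:A1}--\ref{ass:A3}.

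First, I would check that the infimum $m:=\inf\{J(u):u\in U_{ad}\cap BV(\Omega)\}$ is finite: it is bounded below because $f$ is bounded below by \ref{ass:A1} and $\BVSN{\cdot}\ge 0$, and it is finite because $U_{ad}\cap BV(\Omega)\neq\emptyset$ by \ref{ass:A3}. Pick a minimizing sequence $(u_k)\subset U_{ad}\cap BV(\Omega)$ with $J(u_k)\to m$. Since $f$ is bounded below, the values $\BVSN{u_k}$ are bounded. For the $L^2$-bound, I would argue by contradiction: if $\|u_k\|_{L^2(\Omega)}\to\infty$ along a subsequence, then \ref{ass:A2} forces $f(u_k)\to+\infty$, contradicting $J(u_k)\to m$ together with $\BVSN{u_k}\ge 0$. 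Hence $(u_k)$ is bounded in $L^2(\Omega)$, therefore in $L^1(\Omega)$ since $\Omega$ is bounded, and combined with the $BV$-seminorm bound this yields boundedness in $BV(\Omega)$.

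Next I would extract limits. By Proposition \ref{prop:BV}(1) the embedding $BV(\Omega)\hookrightarrow L^1(\Omega)$ is compact, so a subsequence (not relabeled) satisfies $u_k\to u$ strongly in $L^1(\Omega)$ for some $u\in L^1(\Omega)$. By boundedness in $L^2(\Omega)$, a further subsequence satisfies $u_k\rightharpoonup \tilde u$ weakly in $L^2(\Omega)$; identifying the two limits through a common almost-everywhere convergent subsequence of the $L^1$-convergent subsequence gives $\tilde u=u$, so $u\in L^2(\Omega)$ and $u_k\rightharpoonup u$ in $L^2(\Omega)$.

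Finally, I would verify admissibility and pass to the limit. Since $U_{ad}$ is convex and closed in $L^2(\Omega)$ by \ref{ass:A3}, it is weakly closed, hence $u\in U_{ad}$. Lower semicontinuity of the $BV$-seminorm under $L^1$-convergence (Proposition \ref{prop:BV}(2)) gives $\BVSN{u}\le\liminf_k\BVSN{u_k}<\infty$, so in particular $u\in BV(\Omega)$. The weak lower semicontinuity of $f$ on $L^2(\Omega)$ in \ref{ass:A1} yields $f(u)\le\liminf_k f(u_k)$. Adding the two inequalities,
\[
J(u)=f(u)+\BVSN{u}\le\liminf_{k\to\infty}\bigl(f(u_k)+\BVSN{u_k}\bigr)=m,
\]
so $u\in U_{ad}\cap BV(\Omega)$ is a minimizer.

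The only mildly delicate point is matching the strong $L^1$-limit with the weak $L^2$-limit so that both lower semicontinuity statements (for $f$ in $L^2$, for $\BVSN{\cdot}$ in $L^1$) apply to the same function; this is handled by choosing an almost-everywhere convergent sub-subsequence. Everything else is routine direct-method bookkeeping.
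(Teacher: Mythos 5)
Your proposal is correct and follows essentially the same route as the paper: the direct method, with boundedness of the minimizing sequence from \ref{ass:A1}--\ref{ass:A2}, compact embedding into $L^1(\Omega)$ plus weak $L^2$-compactness, weak closedness of $U_{ad}$ from \ref{ass:A3}, and the two lower semicontinuity statements. The only difference is that you spell out details the paper leaves implicit (finiteness of the infimum, the identification of the strong $L^1$-limit with the weak $L^2$-limit, and the superadditivity of $\liminf$ when adding the two inequalities), all of which are handled correctly.
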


\begin{proof}
	The proof is standard. We recall it by following  the lines of the proof of \cite[Theorem 2.1]{CasasKunisch99_BV}.
	Consider a minimizing sequence $(u_k)\subset L^2(\Omega)\cap BV(\Omega)$.
	Since $f$ is bounded from below by \ref{ass:A1},  $\left(\BVSN{u_k}\right)$ is bounded.
	By \ref{ass:A2}, $(u_k)$ is bounded in $L^2(\Omega)$, and hence $(u_k)$ is bounded in $BV(\Omega)$.
	By Proposition \ref{prop:BV}, there is a subsequence $(u_{k_n})$  and $\bar u\in L^2( \Omega)\cap BV(\Omega)$ with
	$u_{k_n}\rightharpoonup \bar u$ in $L^2(\Omega)$ and $u_{k_n}\to\bar u$ in $L^1(\Omega)$.
	Due to \ref{ass:A3}, $U_{ad}$ is weakly closed in $L^2(\Omega)$, and  $\bar u\in U_{ad}$ follows.
	By weak lower semicontinuity \ref{ass:A1} and Proposition \ref{prop:BV}, we obtain
	\[
	J(\bar u)\le\liminf\limits_{k_n\to\infty}J(u_{k_n}) = \inf\limits_{u\in L^2(\Omega)\cap BV(\Omega)}J(u),
	\]
	therefore, $\bar u$ is a solution.
\end{proof}

\subsection{Necessary optimality conditions}

Next, we provide  a first-order necessary optimality condition for \eqref{P_BV}. A similar result with proof can be found in \cite[Theorem 2.3]{CasasKunisch99_BV}.

\begin{theorem}\label{thm_FONC_BV}
	Let Assumptions \ref{ass:A3}--\ref{ass:A4} be satisfied.
Let $\bar u\in BV(\Omega)\cap U_{ad}$
be locally optimal for \eqref{P_BV} with respect to $BV(\Omega) {\cap L^2(\Omega)}$, i.e., there is $r>0$ such that $J(\bar u) \le J(u)$ for all $u\in U_{ad}$
with $\|u-\bar u\|_{BV(\Omega)}+\|u-\bar u\|_{L^2(\Omega)} <r$.
Then there is $\lambda\in \partial\|\cdot\|_{\mathcal{M}{(\Omega)}}(\nabla \bar u) \subset (\mathcal{M}(\Omega)^n)^*$ such that
	\begin{equation}\label{nec_opt_0}
	-\nabla f(\bar u)\in -\Div\lambda + N_{U_{ad}}(\bar u) \text{ in } (BV(\Omega)\cap L^2(\Omega))^*,
	\end{equation}
	where $-\Div:(\mathcal{M}(\Omega)^n)^*\to (BV(\Omega)\cap L^2(\Omega))^*$ is the adjoint operator of $\nabla:BV(\Omega)\cap L^2(\Omega)\to\mathcal{M}(\Omega)^n$,
	and the normal cone $N_{U_{ad}}$ of $U_{ad}$ is determined with respect to $BV(\Omega)\cap L^2(\Omega)$:
	\[
	 N_{U_{ad}} = \{ \mu \in (BV(\Omega)\cap L^2(\Omega))^*: \ \langle \mu, u-\bar u \rangle \le 0 \quad \forall u \in U_{ad} \cap BV(\Omega)\}.
	\]
\end{theorem}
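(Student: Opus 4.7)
Our approach would follow a standard convex-analysis route: turn the local optimality into a variational inequality, recast it as a subdifferential inclusion on $X := BV(\Omega)\cap L^2(\Omega)$ (equipped with the sum norm), and split the subdifferential of $|\cdot|_{BV(\Omega)} + \delta_{U_{ad}}$ via the Moreau--Rockafellar sum rule together with a chain rule for the continuous linear operator $\nabla$.

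For the variational inequality we fix $v \in U_{ad}\cap BV(\Omega)$ and set $u_t := \bar u + t(v-\bar u)$ for $t \in (0,1]$. Convexity of $U_{ad}$ (Assumption \ref{ass:A3}) gives $u_t \in U_{ad}$, and $\|u_t-\bar u\|_{BV(\Omega)}+\|u_t-\bar u\|_{L^2(\Omega)} = t(\|v-\bar u\|_{BV(\Omega)}+\|v-\bar u\|_{L^2(\Omega)}) < r$ for $t$ small enough. Local optimality together with convexity of the seminorm yields
\[
0 \le J(u_t) - J(\bar u) \le f(u_t)-f(\bar u) + t\bigl(|v|_{BV(\Omega)}-|\bar u|_{BV(\Omega)}\bigr).
\]
Dividing by $t$, letting $t\to 0^+$, and using Fréchet differentiability of $f$ (Assumption \ref{ass:A4}) delivers
\[
\langle \nabla f(\bar u), v-\bar u\rangle_{L^2(\Omega)} + |v|_{BV(\Omega)} - |\bar u|_{BV(\Omega)} \ge 0 \quad \forall v \in U_{ad}\cap BV(\Omega).
\]
Since $\nabla f(\bar u)\in L^2(\Omega)\hookrightarrow X^*$ and both sides are $+\infty$ for $v\notin U_{ad}$, this is equivalent to the subdifferential inclusion $-\nabla f(\bar u) \in \partial(|\cdot|_{BV(\Omega)} + \delta_{U_{ad}})(\bar u)$ in $X^*$.

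The subdifferential calculus then splits this inclusion. The seminorm $|\cdot|_{BV(\Omega)}:X\to\R$ is convex and continuous (it is continuous on $BV(\Omega)$, hence on $X$), while $\delta_{U_{ad}}$ is convex, proper and lsc on $X$ with non-empty domain $U_{ad}\cap X$ by \ref{ass:A3}. Since $|\cdot|_{BV(\Omega)}$ is continuous at every point of $\dom \delta_{U_{ad}}$, the Moreau--Rockafellar sum rule gives $-\nabla f(\bar u) \in \partial |\cdot|_{BV(\Omega)}(\bar u) + N_{U_{ad}}(\bar u)$. Writing $|u|_{BV(\Omega)} = \|\nabla u\|_{\mathcal{M}(\Omega)^n}$ as the composition of the continuous linear map $\nabla:X\to\mathcal{M}(\Omega)^n$ with the everywhere-continuous convex norm $\|\cdot\|_{\mathcal{M}(\Omega)^n}$, the convex chain rule yields
\[
\partial|\cdot|_{BV(\Omega)}(\bar u) = \nabla^{*}\,\partial\|\cdot\|_{\mathcal{M}(\Omega)^n}(\nabla \bar u),
\]
and identifying $\nabla^{*}:(\mathcal{M}(\Omega)^n)^*\to X^*$ with $-\Div$ as in the theorem statement produces $\lambda\in \partial\|\cdot\|_{\mathcal{M}(\Omega)^n}(\nabla \bar u)$ with $-\nabla f(\bar u) + \Div \lambda \in N_{U_{ad}}(\bar u)$, which is precisely \eqref{nec_opt_0}.

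The main obstacle lies in the careful bookkeeping in the non-reflexive sum-norm space $X$: verifying that the two qualification conditions (continuity of $|\cdot|_{BV(\Omega)}$ on $X$, and continuity of $\|\cdot\|_{\mathcal{M}(\Omega)^n}$ on $\mathcal{M}(\Omega)^n$) really do give \emph{equality} in the sum and chain rules rather than the generic inclusion, and interpreting the adjoint $-\Div\lambda$ as an element of the large dual $X^*$ in which both $\nabla f(\bar u)$ and $N_{U_{ad}}(\bar u)$ naturally live.
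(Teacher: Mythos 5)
Your proposal is correct and follows essentially the same route as the paper's proof: both pass from local optimality to the inclusion $-\nabla f(\bar u)\in\partial\left(\BVSN{\cdot}+\delta_{U_{ad}}\right)(\bar u)$ in $(BV(\Omega)\cap L^2(\Omega))^*$ and then split it via the Moreau--Rockafellar sum rule and the chain rule for $\|\cdot\|_{\mathcal{M}(\Omega)^n}\circ\nabla$, identifying the adjoint $\nabla^*$ with $-\Div$. The only difference is one of detail, not of substance: you write out the variational-inequality derivation and the constraint-qualification checks (continuity of the seminorm on the sum-norm space, continuity of the measure norm) that the paper compresses into ``standard arguments'' and an appeal to ``the sum and chain rules for the convex subdifferential.''
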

\begin{proof}
Let us define $X:=BV(\Omega)\cap L^2(\Omega)$.
	By standard arguments, we find
	\[
	-\nabla f(\bar u)\in\partial(\BVSN{\cdot}+\delta_{U_{ad}})(\bar u) \subset X^*.
	\]
	Recall,  $\BVSN{u}=\|\nabla u\|_{\mathcal{M}(\Omega)}	 = (\|\cdot\|_{\mathcal{M}(\Omega)}\circ\nabla)(u)$.
	Following \cite[Theorem 2.3]{CasasKunisch99_BV}, let $-\Div:(\mathcal{M}(\Omega)^n)^*\to X^*$ denote the adjoint operator of $\nabla:X\to\mathcal{M}(\Omega)^n$, i.e.,
	\[
	\langle\Div \lambda,u\rangle=-\langle\lambda,\nabla u\rangle\quad \forall u\in X,\ \lambda \in (\mathcal{M}(\Omega)^n)^*.
	\]
	By the sum and chain rules for the convex subdifferential, \eqref{nec_opt_0} can be rewritten as
	\[
	-\nabla f(\bar u)\in -\Div \left( \partial\|\cdot\|_{\mathcal{M}{(\Omega)}}(\nabla \bar u)\right) + N_{U_{ad}}(\bar u),
	\]
	which is the claim.
\end{proof}

\section{Regularization scheme} \label{sec3}

In this section, we introduce the regularization scheme for \eqref{P_BV}.
We will use a smoothing of the $BV$-norm as well as a penalization of the constraints $u\in U_{ad}$.
In order to approximate the $BV$-norm by smooth functions,
we introduce the following family $(\psi_\epsilon)_{\epsilon>0}$ of smooth functions with $\psi_\epsilon(t) \approx |t|$.
For $\epsilon>0$ we define $\psi_{\epsilon}:\R^n\to\R^n$ by
\begin{equation} \label{eq:def_psi}
\psi_\epsilon(t):=\sqrt{\epsilon+|t|^2} +\epsilon |t|^2,
\end{equation}
where $|\cdot|$ denotes the Euclidian norm in $\R^{n}$.
As a first direct consequence of the above definition we have:
\begin{lemma}\label{ass:B}
For $\epsilon>0$,
 $(\psi_\epsilon)_{\epsilon>0}$ is a family of twice
continuously differentiable functions from $\R^n$ to $\R^n$
with the following properties:
\begin{enumerate}[label=(\arabic*), ref=Lemma \ref{ass:B}(\theenumi)]
	\item $t\mapsto\psi_\epsilon(t)$ is convex. \label{ass:B1}
	\item $\psi_\epsilon(t)\ge |t|+\epsilon |t|^2$ and $\psi'_\epsilon(t)t\ge0$ for all $t\in \R^n$. \label{ass:B2}
	\item For all $t\in\R^n$, $\epsilon\mapsto\psi_\epsilon(t)$ is monotonically increasing and $\psi_\epsilon(t)\to|t|$ as $\epsilon\searrow0.$\label{ass:B3}
	\item $t\mapsto\psi'_\epsilon(t)t$ is coercive, i.e., $\psi'_\epsilon(t)t\to\infty$ as $|t|\to \infty$.\label{ass:B4}
	\item $\psi'_\epsilon(t)t  \ge |t| - \sqrt\epsilon$  for all $t\in \R^n$.\label{ass:B5}
\end{enumerate}
\end{lemma}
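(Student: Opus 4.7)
The plan is to verify the five properties directly from the explicit formula \eqref{eq:def_psi}. Twice continuous differentiability is immediate, since $s\mapsto\sqrt{\epsilon+s}$ is smooth on $[0,\infty)$ whenever $\epsilon>0$; the gradient takes the closed form $\psi_\epsilon'(t)=\frac{t}{\sqrt{\epsilon+|t|^2}}+2\epsilon t$, and I will use this expression throughout.

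For (1), I would observe that $\sqrt{\epsilon+|t|^2}$ equals the Euclidean norm of the vector $(t,\sqrt{\epsilon})\in\R^{n+1}$, hence is a convex function of $t$ as the composition of a norm with an affine map; adding the convex quadratic $\epsilon|t|^2$ preserves convexity. Property (2) follows at once from $\sqrt{\epsilon+|t|^2}\ge|t|$ together with $\psi_\epsilon'(t)\cdot t=\frac{|t|^2}{\sqrt{\epsilon+|t|^2}}+2\epsilon|t|^2\ge 0$. For (3), differentiating $\epsilon\mapsto\psi_\epsilon(t)$ yields $\frac{1}{2\sqrt{\epsilon+|t|^2}}+|t|^2>0$, giving strict monotonicity, and the pointwise limit $\psi_\epsilon(t)\to|t|$ as $\epsilon\searrow 0$ is obtained from continuity of the square root. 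Property (4) is driven by the quadratic summand: $\psi_\epsilon'(t)\cdot t\ge 2\epsilon|t|^2\to\infty$ as $|t|\to\infty$.

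The only step that is not a one-line bound is (5). My strategy is to discard the nonnegative term $2\epsilon|t|^2$ and then bound $\frac{|t|^2}{\sqrt{\epsilon+|t|^2}}$ from below using the elementary inequality $\sqrt{\epsilon+|t|^2}\le\sqrt{\epsilon}+|t|$ (which follows by squaring both sides). This gives
\[
\psi_\epsilon'(t)\cdot t\;\ge\;\frac{|t|^2}{\sqrt{\epsilon}+|t|}\;=\;|t|-\sqrt{\epsilon}+\frac{\epsilon}{\sqrt{\epsilon}+|t|}\;\ge\;|t|-\sqrt{\epsilon},
\]
where the middle equality comes from writing $|t|^2=(|t|-\sqrt{\epsilon})(|t|+\sqrt{\epsilon})+\epsilon$ and cancelling. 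Since every assertion reduces to an inequality in the scalar $|t|$, there is no serious obstacle; the only real subtlety lies in finding the correct rationalisation in (5) so that the term $|t|-\sqrt{\epsilon}$ emerges cleanly.
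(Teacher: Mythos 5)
Your proof is correct and follows essentially the same route as the paper: properties (1)--(4) are dispatched as immediate consequences of the formula, and (5) is obtained by discarding the $2\epsilon|t|^2$ term and proving the scalar inequality $\frac{|t|^2}{\sqrt{\epsilon+|t|^2}}\ge |t|-\sqrt{\epsilon}$. The only difference is the algebra at the end: the paper rationalises $|t|^2-|t|\sqrt{\epsilon+|t|^2}$ by multiplying with the conjugate (which implicitly divides by $|t|$ and so formally needs $t\ne0$ as a trivial side case), whereas your use of $\sqrt{\epsilon+|t|^2}\le\sqrt{\epsilon}+|t|$ together with the factorisation $|t|^2=(|t|-\sqrt{\epsilon})(|t|+\sqrt{\epsilon})+\epsilon$ is marginally cleaner and valid for all $t$.
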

\begin{proof}

 Properties (1)--(4) are immediate consequences of the definition.
 (5) can be proven as follows:
 \[
  \psi'_\epsilon(t)t -|t| \ge \frac{ |t|^2 - |t| \sqrt{\epsilon + |t|^2}  }{\sqrt{\epsilon + |t|^2}}
  = \frac{ -\epsilon |t|^2 }{ \sqrt{\epsilon + |t|^2} (|t|^2 + |t| \sqrt{\epsilon + |t|^2})}
  \ge \frac{ -\epsilon |t|^2 }{ \sqrt{\epsilon + |t|^2} |t|^2} \ge -\sqrt\epsilon.
 \]
\end{proof}
The $BV$-minimization problem \eqref{P_BV} is then approximated by
\begin{equation}
\label{P_eps}\tag{$P_\epsilon$}
\min\limits_{u\in H^1(\Omega)}f(u)+\int_\Omega\psi_\epsilon(\nabla u)\dx
\quad\text{ s.t. }u\in U_{ad}.
\end{equation}
The choice \eqref{eq:def_psi} of $\psi_\epsilon$ guarantees the existence of solutions of this problem in $H^1(\Omega)$.
Note that the standard approximation $|t| \approx \sqrt{ \epsilon + |t|^2}$ does \ref{ass:B2},
which ensures that $u\mapsto \int_\Omega\psi_\epsilon(\nabla u)\dx$ is weakly coercive in $H^1(\Omega)$.
The existence of solutions  $u\in H^1(\Omega)$ (and the fact that $\nabla u$ is a measurable function) is important for the subsequent analysis.

Due to the presence of the inequality constraints, Problem \eqref{P_eps} is difficult to solve.
Following existing approaches in the literature, see, e.g., \cite{Kunisch_Wa_2012,schiela_wachsmuth}, we will use a smooth penalization of these constraints.
We define the smooth function $\max_\rho$  by
\begin{equation}\label{eq:max_gamma}
{\max}_\rho(x):=\begin{cases}
\max(0,x)\quad&\text{ if } |x|\ge \frac{1}{2\rho},\\
\frac{\rho}{2}(x+\frac{1}{2\rho})^2&\text{ if }|x|\le\frac{1}{2\rho},
\end{cases}
\end{equation}
where $\rho>0$. Due to the inequalities
\begin{equation} \label{eq:est_max_rho}
0\le\max(0,x)\le{\max}_\rho(x)\le \max(x,0)+\frac{1}{2\rho}\quad \forall x\in\R,
\end{equation}
it can be considered as an approximation of $x\mapsto \max(x,0) = (x)_+$.
In addition, one verifies $\max(0,x)\le t^{-1}{\max}_\rho(tx)$ for all $x\in \R$ and $t>0$.
Let us introduce
\begin{equation}\label{def_Mrho}
M_\rho(x):=\int_{-\infty}^x {\max}_\rho(t)\dt.
\end{equation}
%
Using this function, we define the penalized problem by
\begin{equation}\label{Peps_penal}\tag{$P_{\epsilon,\rho}$}
\min_{u\in H^1(\Omega)}  f(u)+\int_\Omega\psi_\epsilon(\nabla u)\dx+
\int_\Omega\frac 1{\rho}\left(M_\rho(\rho(u_a-u))+M_\rho(\rho( u-u_b))\right)\dx.
\end{equation}
If $u\in H^1(\Omega)$ is a local solution of \eqref{Peps_penal} then it satisfies
\begin{equation}\label{OC_eps,c}
\int_\Omega\nabla f(u)v+\psi_\epsilon'(\nabla u)\nabla v
-\lambda^a_{\rho}(u)v
+\lambda_{\rho}^b(u)v \dx=0 \quad \forall\: v\in H^1(\Omega),
\end{equation}
where we used the abbreviations
\begin{equation}\label{eq_def_lambda}
\lambda^a_{\rho}(u):={\max}_\rho(\rho(u_a-u)),\:\lambda^b_{\rho}(u):={\max}_\rho(\rho(u-u_b)).
\end{equation}
Existence of solutions of \eqref{Peps_penal} and necessity of \eqref{OC_eps,c} for local optimality can be proven by standard arguments.

\begin{corollary}\label{cor_OC_Eps_exist_sol}
 Let assumptions \ref{ass:A1}--\ref{ass:A5} be satisfied. Then the equation \eqref{OC_eps,c}
 admits a solution $u\in H^1(\Omega)$.
\end{corollary}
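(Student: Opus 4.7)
The plan is to obtain a solution of \eqref{OC_eps,c} as the first-order condition at a global minimizer of \eqref{Peps_penal} on $H^1(\Omega)$, which I produce by the direct method. First I would verify coercivity of the objective on $H^1(\Omega)$. Property \ref{ass:B2} gives $\int_\Omega\psi_\epsilon(\nabla u)\dx\ge \epsilon\|\nabla u\|_{L^2(\Omega)}^2$, so the gradient term controls $\|\nabla u\|_{L^2(\Omega)}$. For the $L^2$-norm of $u$ itself I would use the pointwise splitting $u=v+(u-u_b)_+-(u_a-u)_+$ with $v(x)\in[u_a,u_b]$; since $\Omega$ is bounded, $\|v\|_{L^2(\Omega)}$ is then bounded by a constant depending only on $u_a,u_b,|\Omega|$. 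A direct computation from \eqref{def_Mrho} shows $M_\rho(x)\ge x^2/2 - c(\rho)$ for $x\ge 0$ and $M_\rho(x)\ge 0$ otherwise, so the penalty terms dominate $\|(u-u_b)_+\|_{L^2(\Omega)}^2$ and $\|(u_a-u)_+\|_{L^2(\Omega)}^2$ up to constants. Combined with $f$ being bounded from below by \ref{ass:A1}, these estimates yield coercivity of the whole objective in $H^1(\Omega)$.

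Next I would extract from a minimizing sequence $(u_k)$ a subsequence with $u_k\rightharpoonup\bar u$ in $H^1(\Omega)$. By the Rellich--Kondrachov theorem, $u_k\to\bar u$ in $L^2(\Omega)$, so $f(\bar u)\le\liminf_k f(u_k)$ by \ref{ass:A1}. Convexity of $\psi_\epsilon$ (see \ref{ass:B1}) together with its at-most-quadratic growth makes $u\mapsto\int_\Omega\psi_\epsilon(\nabla u)\dx$ convex and continuous on $H^1(\Omega)$, hence weakly lower semicontinuous. Convexity of $M_\rho$ and the linear growth of $M_\rho'=\max_\rho$ make the penalty convex and continuous on $L^2(\Omega)$, hence weakly lower semicontinuous on $H^1(\Omega)$. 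Passing to the limit in the minimizing inequality gives optimality of $\bar u$.

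Finally, to extract \eqref{OC_eps,c}, I would compute the Fréchet derivative of each term at $\bar u$: $f$ is $C^1$ on $L^2(\Omega)$ by \ref{ass:A4}, $\psi_\epsilon\in C^2(\R^n;\R)$ with $\psi_\epsilon'$ of at most linear growth, and $M_\rho\in C^2(\R)$ with $M_\rho'=\max_\rho$ globally Lipschitz. Standard Nemytskii calculus together with the continuous embedding $H^1(\Omega)\hookrightarrow L^2(\Omega)$ shows that the three superposition operators are continuously Fréchet differentiable on $H^1(\Omega)$. Applying Fermat's rule at $\bar u$ and rewriting the penalty terms via the definitions \eqref{eq_def_lambda} produces exactly \eqref{OC_eps,c}.

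The only nontrivial point is the coercivity bookkeeping, where one must combine the quadratic lower bound on $\psi_\epsilon$ with the quadratic lower bound on $M_\rho$ (on the active side) and the pointwise decomposition of $u$ to conclude $H^1$-coercivity; the remaining steps are routine applications of the direct method and the chain rule, exactly as anticipated by the remark preceding the corollary.
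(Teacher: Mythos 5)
Your proposal is correct and takes essentially the same route as the paper, which dispenses with this corollary by noting that existence of solutions of \eqref{Peps_penal} and necessity of \eqref{OC_eps,c} "can be proven by standard arguments" --- your direct-method argument (with $H^1(\Omega)$-coercivity supplied precisely by the $\epsilon|t|^2$ term in $\psi_\epsilon$, the quadratic lower bound $M_\rho(x)\ge\tfrac12\max(0,x)^2$, and the decomposition \eqref{eq35}) followed by Fermat's rule is exactly that standard argument spelled out. No gaps.
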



We will investigate the behavior of a penalty and smoothing method to solve \eqref{P_BV}.
Since \eqref{Peps_penal} is a non-convex problem, it is unrealistic to assume that one can compute global solutions.
Instead, the iterates will be chosen as stationary points of \eqref{Peps_penal}.
Hence, we are interested in the behavior of stationary points $u_{\epsilon,\rho}$ and corresponding multipliers $\lambda_{\rho}^a(u_{\epsilon,\rho})$, $\lambda^b_{\rho}(u_{\epsilon,\rho})$ as $\rho\to\infty$
and $\epsilon\searrow0$.

The resulting method then reads as follows.
\begin{algorithm} \label{alg1}
Choose $\epsilon_0\in(0,1),\rho_0>0$ and $u_0\in H^1(\Omega)$.
\begin{enumerate}
\item
Compute $u_k$ as solution to
\begin{equation}\label{alg1:eq_k}
\int_\Omega\nabla f(u)v+\psi_{\epsilon_k}'(\nabla u)\nabla v
-\lambda^a_{\rho_k}(u)v
+\lambda_{\rho_k}^b(u)v \dx=0 \quad \forall\: v\in H^1(\Omega),
\end{equation}
where $\lambda^a_{\rho}(u),\lambda_{\rho}^b(u)$ are defined in \eqref{eq_def_lambda}.
\item Choose $\rho_{k+1}>\rho_k,\: \epsilon_{k+1}<\epsilon_k.$
\item If a suitable stopping criterion is satisfied: Stop.  Else set $k:=k+1$ and go to Step 1.
\end{enumerate}
\end{algorithm}

In view of Corollary \ref{cor_OC_Eps_exist_sol}, the algorithm is well-defined.
In the following, we assume that the algorithm generates an infinite sequence of iterates $(u_k, \lambda^a_{\rho_k}(u_k),\lambda^b_{\rho_k}(u_k))$.
Here, we are interested to prove that weak limit points are stationary, i.e., they satisfy the optimality condition \eqref{nec_opt_0} for \eqref{P_BV}.
Throughout the subsequent analysis, we assume that assumptions \ref{ass:A1}--\ref{ass:A5} are satisfied
\subsection{A-priori bounds} \label{sec:a-priori}

In order to investigate the sequences of iterates $(u_k)$ and its (weak) limit points, it is reasonable to  derive bounds of iterates $u_k$, i.e., solutions of \eqref{OC_eps,c}, first.
To this end, we will study solutions $u\in H^1(\Omega)$ of the nonlinear variational equation
\begin{equation}\label{eq_u_g}
\int_\Omega \psi_\epsilon'(\nabla u)\nabla v
-\lambda^a_{\rho}(u)v
+\lambda_{\rho}^b(u)v \dx=
\int_\Omega gv\dx
\quad \forall\: v\in H^1(\Omega)
\end{equation}
for $\epsilon>0$, $\rho>0$, and $g\in L^2(\Omega)$.
The functions $\lambda^a_\rho(u)$, $\lambda^b_\rho(u)$ are defined in \eqref{eq_def_lambda} as
\[
\lambda^a_{\rho}(u):={\max}_\rho(\rho(u_a-u)),\:\lambda^b_{\rho}(u):={\max}_\rho(\rho(u-u_b)).
\]

Let us start with the following lemma. It shows that the supports of the multipliers $\lambda^a_\rho(u)$, $\lambda^b_\rho(u)$
do not overlap if the penalty parameter is large enough.
\begin{lemma}\label{lem_lalb_disjoint}
Let $u\in H^1(\Omega)$ be a solution of \eqref{eq_u_g} to $g\in L^2(\Omega)$.
Suppose $\rho^2\ge\frac1{u_b-u_a}$. Then it holds
\[
 \lambda^a_\rho(u) \cdot \lambda^b_\rho(u) =0 \text{ a.e.\ on } \Omega.
 \]
\end{lemma}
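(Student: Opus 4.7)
The plan is to argue pointwise; the statement is in fact independent of the variational equation \eqref{eq_u_g} and follows purely from the structure of $\max_\rho$ together with the hypothesis on $\rho$.

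First, I would pin down the zero set of $\max_\rho$. From the piecewise definition \eqref{eq:max_gamma}, $\max_\rho(x) = 0$ if and only if $x \le -\frac{1}{2\rho}$: on the outer branches $\max_\rho$ coincides with $\max(0,x)$, which vanishes iff $x \le 0$, and combined with $|x| \ge \frac{1}{2\rho}$ this forces $x \le -\frac{1}{2\rho}$; on the inner quadratic branch, $\frac{\rho}{2}(x + \frac{1}{2\rho})^2$ vanishes only at the single point $x = -\frac{1}{2\rho}$.

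Second, I would translate this observation to the multipliers via \eqref{eq_def_lambda}. The inequality $\lambda_\rho^a(u)(x) > 0$ is equivalent to $\rho(u_a - u(x)) > -\frac{1}{2\rho}$, i.e.\ to $u(x) < u_a + \frac{1}{2\rho^2}$. Symmetrically, $\lambda_\rho^b(u)(x) > 0$ is equivalent to $u(x) > u_b - \frac{1}{2\rho^2}$.

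Finally, I would combine these two equivalences: if both multipliers were strictly positive at some $x \in \Omega$, then $u_b - \frac{1}{2\rho^2} < u_a + \frac{1}{2\rho^2}$, hence $u_b - u_a < \frac{1}{\rho^2}$, contradicting $\rho^2 \ge \frac{1}{u_b-u_a}$. Therefore at a.e.\ $x \in \Omega$ at least one of the two multipliers vanishes, and their product is zero a.e. There is no real obstacle in this argument; the only subtlety is that the quadratic transition branch of $\max_\rho$ is strictly positive except at the single point $x = -\frac{1}{2\rho}$, so the zero set of $\max_\rho$ must be identified as the half-line $(-\infty, -\frac{1}{2\rho}]$ rather than $(-\infty, 0]$.
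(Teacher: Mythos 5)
Your proposal is correct and is essentially the same argument as the paper's: the paper also argues pointwise, noting that $\lambda^a_\rho(u)(x)\cdot\lambda^b_\rho(u)(x)\ne 0$ forces $u(x) < u_a + \frac{1}{2\rho^2}$ and $u(x) > u_b - \frac{1}{2\rho^2}$, hence $\rho^2 < \frac{1}{u_b-u_a}$, a contradiction. Your write-up merely makes explicit the identification of the zero set of $\max_\rho$ as $\left(-\infty,-\frac{1}{2\rho}\right]$ and the observation that the variational equation \eqref{eq_u_g} plays no role, both of which are implicit in the paper's two-line proof.
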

\begin{proof}
 Let $x\in \Omega$ such that $\lambda^a_\rho(u)(x) \cdot \lambda^b_\rho(u)(x) \ne 0$. Then it holds
 $u(x) < u_a(x) + \frac1{2\rho^2}$ and  $u(x) > u_b(x) - \frac1{2\rho^2}$.
 Consequently, $\rho^2< \frac1{u_b-u_a}$ follows.
\end{proof}
Under the assumptions that the bounds $u_a$ and $u_b$ are constant, we
can prove the following series of helpful results regarding the boundedness of iterates.
 Let us start with the boundedness of the multiplier sequences.
This result is inspired by related results for the $H^1$-obstacle problem,
see, e.g., \cite[Lemma 5.1]{KinderlehrerStampacchia1980}, see also \cite[Lemma 2.3]{schiela_wachsmuth}.
The results for $H^1$-obstacle problems require the assumption $\Delta u_a, \Delta u_b\in L^2(\Omega)$.
It is not clear to us, how the following proof can be generalized to non-constant obstacles $u_a,u_b$.
\begin{lemma}\label{lem_lambda_bounded}
Let $u\in H^1(\Omega)$ be a solution of \eqref{eq_u_g} to $g\in L^2(\Omega)$.
Suppose $\rho^2\ge\frac1{u_b-u_a}$. Then it holds
 \[
   \|\lambda^a_\rho(u)\|_{L^2(\Omega)} +   \|\lambda^b_\rho(u)\|_{L^2(\Omega)}  \le 2\|g\|_{L^2(\Omega)}.
 \]
\end{lemma}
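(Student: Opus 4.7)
The plan is to test the variational equation \eqref{eq_u_g} with $v := \lambda^b_\rho(u) - \lambda^a_\rho(u)$. Since $\max_\rho \in C^1(\R)$ with $0 \le {\max}_\rho' \le 1$, the composition $v$ lies in $H^1(\Omega)$ whenever $u$ does, so this is an admissible test function. I expect the zero-order terms to produce exactly $(\lambda^a_\rho(u))^2 + (\lambda^b_\rho(u))^2$ pointwise, while the gradient term will turn out to be nonnegative; combined with Cauchy--Schwarz this will deliver the claimed bound.

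First I would analyze the zero-order part. By Lemma \ref{lem_lalb_disjoint} and the assumption $\rho^2 \ge 1/(u_b-u_a)$, the multipliers have disjoint supports, so that
\[
 \bigl(-\lambda^a_\rho(u) + \lambda^b_\rho(u)\bigr)\bigl(\lambda^b_\rho(u) - \lambda^a_\rho(u)\bigr) = \bigl(\lambda^a_\rho(u)\bigr)^2 + \bigl(\lambda^b_\rho(u)\bigr)^2
\]
pointwise a.e.\ in $\Omega$. Next I would treat the gradient term. By the chain rule,
\[
\nabla\bigl(\lambda^b_\rho(u)-\lambda^a_\rho(u)\bigr) = \rho\bigl[{\max}_\rho'(\rho(u-u_b)) + {\max}_\rho'(\rho(u_a-u))\bigr]\,\nabla u,
\]
and the bracketed factor is nonnegative because $\max_\rho$ is nondecreasing. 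Therefore the integrand $\psi_\epsilon'(\nabla u)\cdot\nabla v$ equals this nonnegative scalar times $\psi_\epsilon'(\nabla u)\cdot\nabla u$, which is itself nonnegative by Lemma \ref{ass:B2}(2). Dropping this nonnegative term and using Cauchy--Schwarz on the right-hand side yields
\[
 \|\lambda^a_\rho(u)\|_{L^2(\Omega)}^2 + \|\lambda^b_\rho(u)\|_{L^2(\Omega)}^2 \le \|g\|_{L^2(\Omega)}\bigl(\|\lambda^a_\rho(u)\|_{L^2(\Omega)} + \|\lambda^b_\rho(u)\|_{L^2(\Omega)}\bigr).
\]

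Finally, writing $A := \|\lambda^a_\rho(u)\|_{L^2(\Omega)}$ and $B := \|\lambda^b_\rho(u)\|_{L^2(\Omega)}$, the elementary inequality $(A+B)^2 \le 2(A^2+B^2)$ combined with the previous estimate gives $(A+B)^2 \le 2\|g\|_{L^2(\Omega)}(A+B)$, hence $A+B \le 2\|g\|_{L^2(\Omega)}$, which is the claim. The main subtlety I anticipate is the gradient term: one has to recognize that, because $\max_\rho$ is nondecreasing and the two multipliers have disjoint supports, the combined test function $v$ has gradient pointing in the same direction as $\nabla u$, allowing $\psi_\epsilon'(\nabla u)\cdot\nabla u \ge 0$ from Lemma \ref{ass:B2}(2) to be applied in a single sweep; this is precisely where the specific structure of the smoothing and of the penalty are exploited.
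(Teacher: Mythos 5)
Your proof is correct and uses essentially the same ingredients as the paper's: the disjoint-support property of Lemma \ref{lem_lalb_disjoint}, the sign argument combining ${\max}_\rho'\ge 0$ with $\psi_\epsilon'(t)t\ge 0$ from \ref{ass:B2} to discard the gradient term, and Cauchy--Schwarz. The only (cosmetic) difference is that you test once with the combined function $\lambda^b_\rho(u)-\lambda^a_\rho(u)$ and finish with $(A+B)^2\le 2(A^2+B^2)$, whereas the paper tests separately with $\lambda^a_\rho(u)$ and $\lambda^b_\rho(u)$, which yields the slightly stronger individual bounds $\|\lambda^a_\rho(u)\|_{L^2(\Omega)}\le\|g\|_{L^2(\Omega)}$ and $\|\lambda^b_\rho(u)\|_{L^2(\Omega)}\le\|g\|_{L^2(\Omega)}$ before summing.
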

\begin{proof}
	To show boundedness of the multipliers $\lambda^a_\rho(u),\lambda^b_\rho(u)$ in $L^2(\Omega)$, we test the optimality condition \eqref{eq_u_g} with $\lambda^a_\rho(u)$ and $\lambda^b_\rho(u)$, respectively.
	We get for $\lambda^a_\rho(u)={\max}_{\rho}(\rho(u_a-u))$
	\[
	\|\lambda^a_\rho(u)\|^2_{L^2(\Omega)} = \int_\Omega \psi_{\epsilon}'(\nabla u) \nabla({\max}_{ \rho}(\rho(u_a-u)))\dx
	 - \int_\Omega g\lambda^a_\rho(u)\dx
	 + \int_\Omega\lambda^b_\rho(u)\lambda^a_\rho(u)\dx.
	\]
	Due to Lemma \ref{lem_lalb_disjoint}, the last term is zero.
	It remains to analyze the first term.
	Here, we find
	\[
	 \int_\Omega \psi_{\epsilon}'(\nabla u) \nabla {\max}_{ \rho}(\rho(u_a-u)) \dx
	 =\int_\Omega \psi_{\epsilon}'(\nabla u) \rho {\max}_{ \rho}'(\rho(u_a-u)) \nabla (-u) \dx \le0,
	\]
	where we used \ref{ass:B2} and ${\max}_{ \rho}' \ge0$.
	This proves $\|\lambda^a_\rho(u)\|_{L^2(\Omega)} \le \|g\|_{L^2(\Omega)}$.
	Similarly, $\|\lambda^b_\rho(u)\|_{L^2(\Omega)} \le \|g\|_{L^2(\Omega)}$ can be proven.
\end{proof}

\begin{corollary} \label{cor_constr_vio}
Let $u\in H^1(\Omega)$ be a solution of \eqref{eq_u_g} to $g\in L^2(\Omega)$.
Suppose $\rho^2\ge\frac1{u_b-u_a}$.
Then it holds
	\[
	\|(u-u_b)_+\|_{L^2(\Omega)}+\|(u_a-u)_+\|_{L^2(\Omega)} \le 2\rho^{-1}\|g\|_{L^2(\Omega)}.
	\]
\end{corollary}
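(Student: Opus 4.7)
The plan is to derive the bound on the constraint violation directly from the multiplier bound of Lemma \ref{lem_lambda_bounded} via the pointwise inequality $\max(0,x) \le {\max}_\rho(x)$ stated in \eqref{eq:est_max_rho}.

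First I would handle the upper-obstacle term. Applying \eqref{eq:est_max_rho} with $x = \rho(u(x) - u_b)$ yields the pointwise estimate
\[
 \rho(u(x) - u_b)_+ = \max(0, \rho(u(x) - u_b)) \le {\max}_\rho(\rho(u(x) - u_b)) = \lambda^b_\rho(u)(x)
\]
for almost every $x \in \Omega$, hence $(u - u_b)_+ \le \rho^{-1} \lambda^b_\rho(u)$ a.e. Taking $L^2$-norms and invoking the estimate $\|\lambda^b_\rho(u)\|_{L^2(\Omega)} \le \|g\|_{L^2(\Omega)}$ that is actually established in the proof of Lemma \ref{lem_lambda_bounded} (the sum bound there is obtained by proving each individual bound) gives
\[
 \|(u - u_b)_+\|_{L^2(\Omega)} \le \rho^{-1} \|\lambda^b_\rho(u)\|_{L^2(\Omega)} \le \rho^{-1} \|g\|_{L^2(\Omega)}.
\]

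The argument for the lower-obstacle term is symmetric: with $x = \rho(u_a - u(x))$ the same inequality from \eqref{eq:est_max_rho} gives $(u_a - u)_+ \le \rho^{-1} \lambda^a_\rho(u)$ a.e., so $\|(u_a - u)_+\|_{L^2(\Omega)} \le \rho^{-1}\|g\|_{L^2(\Omega)}$. Adding the two estimates yields the asserted bound $2\rho^{-1}\|g\|_{L^2(\Omega)}$.

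There is no real obstacle here since Lemma \ref{lem_lambda_bounded} does all the heavy lifting; the only subtlety is to notice that the factor $\rho$ inside the argument of ${\max}_\rho$ translates, via \eqref{eq:est_max_rho}, into the factor $\rho^{-1}$ on the outside when bounding the positive part of the constraint violation. The hypothesis $\rho^2 \ge 1/(u_b - u_a)$ is not used again at this stage — it is only inherited through the application of Lemma \ref{lem_lambda_bounded}.
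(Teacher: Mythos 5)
Your proof is correct and follows essentially the same route as the paper: the pointwise estimate $(u-u_b)_+ \le \rho^{-1}{\max}_\rho(\rho(u-u_b)) = \rho^{-1}\lambda^b_\rho(u)$ (which the paper states via the inequality $\max(x,0)\le \rho^{-1}{\max}_\rho(\rho x)$, equivalent to your combination of \eqref{eq:est_max_rho} with positive homogeneity of $\max(0,\cdot)$), followed by the multiplier bound of Lemma \ref{lem_lambda_bounded}. Your side remark about needing the individual bounds $\|\lambda^a_\rho(u)\|_{L^2(\Omega)},\|\lambda^b_\rho(u)\|_{L^2(\Omega)}\le\|g\|_{L^2(\Omega)}$ from inside that lemma's proof is accurate but not actually necessary, since summing the two terms and using the stated bound $\|\lambda^a_\rho(u)\|_{L^2(\Omega)}+\|\lambda^b_\rho(u)\|_{L^2(\Omega)}\le 2\|g\|_{L^2(\Omega)}$ already gives the claimed constant.
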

\begin{proof}
 Due to the definition of $\max_\rho$, we have $\max(x,0) \le \rho^{-1}\max_\rho(\rho x)$ for all $x\in \R$.
 This implies
 \[
  \|(u-u_b)_+\|_{L^2(\Omega)} \le \rho^{-1} \|{\max}_{\rho}(\rho(u-u_b))\|_{L^2(\Omega)} = \rho^{-1}\|\lambda^b_\rho(u)\|_{L^2(\Omega)},
 \]
 and the claim follows by Lemma \ref{lem_lambda_bounded} above.
\end{proof}

\begin{corollary}\label{cor_bound_L2}
Let $u\in H^1(\Omega)$ be a solution of \eqref{eq_u_g} to $g\in L^2(\Omega)$.
Suppose $\rho^2\ge\frac1{u_b-u_a}$.
Then it holds
	\[
	\|u\|_{L^2(\Omega)} \le 2\rho^{-1}\|g\|_{L^2(\Omega)} +
	\| \max(|u_a|,|u_b|)\|_{L^2(\Omega)}.
	\]
\end{corollary}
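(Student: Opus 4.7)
The plan is to decompose $u$ pointwise into a part that violates the bounds and a part that lies inside $[u_a,u_b]$, and then control each piece separately. Concretely, for every $x\in\Omega$ I would write
\[
u(x) = (u(x)-u_b)_+ - (u_a-u(x))_+ + P(x),
\]
where $P(x):=\max(u_a,\min(u(x),u_b))$ denotes the pointwise projection of $u(x)$ onto the interval $[u_a,u_b]$. A short case distinction (the three cases $u(x)>u_b$, $u(x)<u_a$, $u_a\le u(x)\le u_b$) verifies this identity. Since the two positive parts are never simultaneously nonzero, and $u_a<u_b$, the decomposition is well-defined.

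The next step is to take $L^2(\Omega)$-norms on both sides and apply the triangle inequality:
\[
\|u\|_{L^2(\Omega)} \le \|(u-u_b)_+\|_{L^2(\Omega)} + \|(u_a-u)_+\|_{L^2(\Omega)} + \|P\|_{L^2(\Omega)}.
\]
The first two terms are exactly the constraint violation controlled by Corollary \ref{cor_constr_vio}, so their sum is bounded by $2\rho^{-1}\|g\|_{L^2(\Omega)}$. For the last term I use the pointwise bound $|P(x)|\le\max(|u_a|,|u_b|)$, which holds because $P(x)\in[u_a,u_b]$; hence $\|P\|_{L^2(\Omega)}\le\|\max(|u_a|,|u_b|)\|_{L^2(\Omega)}$ (here the right-hand side is understood as the $L^2$-norm of the constant function, which matches the statement's notation).

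Adding the two bounds together yields the claim directly; no further analysis of \eqref{eq_u_g} is needed beyond what Corollary \ref{cor_constr_vio} already provides. I do not anticipate any real obstacle: the only slightly subtle point is the pointwise decomposition identity, and once that is recorded the result is essentially a triangle-inequality calculation combined with the previous corollary.
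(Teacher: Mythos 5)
Your proof is correct and follows essentially the same route as the paper: the identity $u = (u-u_b)_+ - (u_a-u)_+ + \proj_{[u_a,u_b]}(u)$ you derive is exactly the identity \eqref{eq35} the paper invokes, combined with Corollary \ref{cor_constr_vio} and the triangle inequality. The only difference is that you spell out the case distinction and the pointwise bound on the projection, which the paper leaves implicit.
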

\begin{proof}
 The claim is a consequence of Corollary \ref{cor_constr_vio} and the identity
\begin{equation}\label{eq35}
u= ( u-u_b)_+-(u_a- u)_++\proj_{[u_a,u_b]}( u).
\end{equation}
 \end{proof}

\begin{lemma}\label{lem_bound_W11}
Let $u\in H^1(\Omega)$ be a solution of \eqref{eq_u_g} to $g\in L^2(\Omega)$.
Suppose $\rho^2\ge\frac1{u_b-u_a}$.
 Then it holds
 \[
  \|\nabla u\|_{L^1(\Omega)} \le 3 \|g\|_{L^2(\Omega)}  \|u\|_{L^2(\Omega)}+\sqrt{\epsilon} |\Omega|.
 \]
\end{lemma}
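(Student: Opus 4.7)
The natural test function is $v = u$ itself, which is admissible since $u \in H^1(\Omega)$. Substituting into \eqref{eq_u_g} gives
\[
\int_\Omega \psi_\epsilon'(\nabla u)\nabla u \dx = \int_\Omega gu\dx + \int_\Omega \lambda^a_\rho(u)\,u\dx - \int_\Omega \lambda^b_\rho(u)\,u\dx.
\]

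The whole point of the reinforced approximation \eqref{eq:def_psi} is exactly the lower bound \ref{ass:B5}: integrating $\psi_\epsilon'(\nabla u)\nabla u \ge |\nabla u| - \sqrt\epsilon$ pointwise over $\Omega$ converts the left-hand side into $\|\nabla u\|_{L^1(\Omega)}$ up to the error $\sqrt\epsilon|\Omega|$. This yields
\[
\|\nabla u\|_{L^1(\Omega)} \le \sqrt{\epsilon}\,|\Omega| + \int_\Omega gu\dx + \int_\Omega \lambda^a_\rho(u)\,u\dx - \int_\Omega \lambda^b_\rho(u)\,u\dx.
\]

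The three remaining integrals are handled by Cauchy--Schwarz. The first is at most $\|g\|_{L^2(\Omega)}\|u\|_{L^2(\Omega)}$. For the two multiplier terms I would invoke Lemma \ref{lem_lambda_bounded}, whose proof actually delivers the sharper bounds $\|\lambda^a_\rho(u)\|_{L^2(\Omega)}\le\|g\|_{L^2(\Omega)}$ and $\|\lambda^b_\rho(u)\|_{L^2(\Omega)}\le\|g\|_{L^2(\Omega)}$ individually (this is what is actually proved there, although only the sum is stated). Applying Cauchy--Schwarz to each, and bounding the sign-indefinite terms by their absolute values, gives two further contributions of $\|g\|_{L^2(\Omega)}\|u\|_{L^2(\Omega)}$ each. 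Summing yields the claimed estimate.

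There is essentially no obstacle here; the only thing to check carefully is that we may use $\|\lambda^a_\rho(u)\|_{L^2(\Omega)}\le\|g\|_{L^2(\Omega)}$ (and likewise for $\lambda^b_\rho(u)$) rather than just the bound on the sum, which is exactly what Lemma \ref{lem_lambda_bounded} establishes in its proof. The hypothesis $\rho^2\ge 1/(u_b-u_a)$ is needed only to invoke that lemma.
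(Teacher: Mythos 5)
Your proof is correct and follows essentially the same route as the paper's own: test \eqref{eq_u_g} with $v=u$, apply the pointwise bound \ref{ass:B5} to get $\|\nabla u\|_{L^1(\Omega)}-\sqrt{\epsilon}\,|\Omega|$ on the left, and estimate the right-hand side by Cauchy--Schwarz together with Lemma \ref{lem_lambda_bounded}. The only superfluous step is your insistence on the individual bounds $\|\lambda^a_\rho(u)\|_{L^2(\Omega)}\le\|g\|_{L^2(\Omega)}$ and $\|\lambda^b_\rho(u)\|_{L^2(\Omega)}\le\|g\|_{L^2(\Omega)}$ extracted from that lemma's proof: the bound on the sum as actually stated already gives $\bigl(\|g\|_{L^2(\Omega)}+\|\lambda^a_\rho(u)\|_{L^2(\Omega)}+\|\lambda^b_\rho(u)\|_{L^2(\Omega)}\bigr)\|u\|_{L^2(\Omega)}\le 3\|g\|_{L^2(\Omega)}\|u\|_{L^2(\Omega)}$, which is precisely how the paper concludes.
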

\begin{proof}
We test the optimality condition \eqref{OC_eps,c} with $u$ and use \ref{ass:B5} to get the estimate
\[
 \begin{split}
\int_\Omega |\nabla u| -\sqrt{\epsilon} \dx\le
\int_\Omega \psi'_{\epsilon}(\nabla u)\nabla u\dx
& = \left|\int_\Omega gu-  \lambda^a_\rho(u) u + \lambda^b_\rho(u) u\dx\right| \\
& \le (\|g\|_{L^2(\Omega)} + \|\lambda^a_\rho(u)\|_{L^2(\Omega)}+ \|\lambda^b_\rho(u)\|_{L^2(\Omega)} ) \|u\|_{L^2(\Omega)}.
\end{split}
\]
The claim follows with the estimate of Lemma \ref{lem_lambda_bounded}.
\end{proof}

\subsection{Preliminary convergence results} \label{sec_prelim}

As next step, we derive convergence properties of solutions $u_k\in H^1(\Omega)$ of
\begin{equation}\label{eq_uk_gk}
\int_\Omega \psi_{\epsilon_k}'(\nabla u_k)\nabla v
-\lambda^a_{\rho_k}(u_k)v
+\lambda_{\rho_k}^b(u_k)v \dx=
\int_\Omega g_kv\dx
\quad \forall\: v\in H^1(\Omega)
\end{equation}
where
\begin{equation}\label{eq_ass_conv}
 \epsilon_k \searrow0, \ \rho_k \to +\infty, \ g_k \rightharpoonup g \text{ in } L^2(\Omega).
\end{equation}
From the results of the previous section, we immediately obtain that $(u_k)$ is bounded in $BV(\Omega)$, and $(\lambda^a_{\rho_k}(u_k))$ and $(\lambda^b_{\rho_k}(u_k))$
are bounded in $L^2(\Omega)$.
Moreover, strong limit points of $(u_k)$ satisfy the inequality constraints in \eqref{eq:def_Uad} due to Corollary \ref{cor_constr_vio}.
In a first result, we need to lift the strong convergence of (a subsequence of) $(u_k)$ in $L^1(\Omega)$,
which is a consequence of the compact embedding $BV(\Omega) \hookrightarrow L^r(\Omega)$, $r<\frac n{n-1}$,
to strong convergence in $L^2(\Omega)$.

\begin{lemma}\label{lem_strong_conv_L2}
Assume \eqref{eq_ass_conv}.
Let $u_k$, $k\in \N$, be a solution of \eqref{eq_uk_gk}.
Suppose $u_k \to u$ in $L^1(\Omega)$.
Then $u_k \to u$ in $L^2(\Omega)$.
\end{lemma}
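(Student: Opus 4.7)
The plan is to reduce the problem to $L^2$-convergence of the truncated (projected) part of $u_k$, since the constraint-violation parts vanish in $L^2$ by the a-priori estimates already in place. The key identity is \eqref{eq35}, namely
\[
u_k = (u_k-u_b)_+ - (u_a-u_k)_+ + \proj_{[u_a,u_b]}(u_k).
\]
The assumptions on $(\epsilon_k,\rho_k,g_k)$ in \eqref{eq_ass_conv} ensure that $(g_k)$ is bounded in $L^2(\Omega)$, so Lemma \ref{lem_lambda_bounded} and Corollary \ref{cor_constr_vio} apply for $k$ large.

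First, I would observe that Corollary \ref{cor_constr_vio} gives $\|(u_k-u_b)_+\|_{L^2(\Omega)} + \|(u_a-u_k)_+\|_{L^2(\Omega)} \le 2\rho_k^{-1}\|g_k\|_{L^2(\Omega)} \to 0$, so these two pieces already converge to $0$ strongly in $L^2(\Omega)$. In particular, passing to an a.e.-convergent subsequence of $u_k \to u$ in $L^1(\Omega)$, one concludes $(u-u_b)_+ = (u_a-u)_+ = 0$ a.e., i.e.\ $u_a \le u \le u_b$ a.e.\ in $\Omega$, so $u \in L^\infty(\Omega)$ and $\proj_{[u_a,u_b]}(u) = u$.

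Next, set $w_k := \proj_{[u_a,u_b]}(u_k)$. Since $u_a,u_b$ are constants (Assumption \ref{ass:A5}), we have the uniform bound $\|w_k\|_{L^\infty(\Omega)} \le \max(|u_a|,|u_b|) =: M$, and likewise $\|u\|_{L^\infty(\Omega)} \le M$. Because the projection onto $[u_a,u_b]$ is $1$-Lipschitz and $\proj_{[u_a,u_b]}(u)=u$,
\[
\|w_k - u\|_{L^1(\Omega)} = \|\proj_{[u_a,u_b]}(u_k) - \proj_{[u_a,u_b]}(u)\|_{L^1(\Omega)} \le \|u_k - u\|_{L^1(\Omega)} \to 0.
\]
Then the bound $|w_k - u|^2 \le 2M\,|w_k-u|$ yields
\[
\|w_k - u\|_{L^2(\Omega)}^2 \le 2M\,\|w_k - u\|_{L^1(\Omega)} \to 0,
\]
i.e.\ $w_k \to u$ strongly in $L^2(\Omega)$. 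Combining the decomposition \eqref{eq35} with the two convergences just established produces $u_k \to u$ in $L^2(\Omega)$, which is the claim.

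There is no real obstacle here; the only thing to be careful about is the use of \ref{ass:A5} (that $u_a,u_b$ are constants), which provides the $L^\infty$-bound on the projection that upgrades $L^1$-convergence to $L^2$-convergence via the interpolation $\|\cdot\|_{L^2}^2 \le 2M\|\cdot\|_{L^1}$. The a-priori estimates from Subsection \ref{sec:a-priori} do the rest of the work automatically.
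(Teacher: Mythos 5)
Your proposal is correct and follows essentially the same route as the paper's proof: the decomposition \eqref{eq35}, the vanishing of the constraint-violation parts via Corollary \ref{cor_constr_vio}, and the upgrade of $L^1$- to $L^2$-convergence of the projected part using its $L^\infty$-bound (your interpolation $\|\cdot\|_{L^2(\Omega)}^2 \le 2M\|\cdot\|_{L^1(\Omega)}$ is exactly the H\"older-inequality step in the paper). Your write-up is merely more explicit about the $1$-Lipschitz continuity of the projection and the role of Assumption \ref{ass:A5}.
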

\begin{proof}
 We use again the identity \eqref{eq35}:
 \[
u_k= ( u_k-u_b)_+-(u_a- u_k)_++\proj_{[u_a,u_b]}( u_k).
 \]
 By Corollary \ref{cor_constr_vio}, the first two terms converge to zero in $L^2(\Omega)$,
 which implies $u_a \le u\le u_b$ almost everywhere.
 The sequence $(\proj_{[u_a,u_b]}( u_k))$ converges to $\proj_{[u_a,u_b]}(u)=u$ in $L^1(\Omega)$
 and is bounded in $L^\infty(\Omega)$. By H\"older inequality it converges in $L^2(\Omega)$.
 \end{proof}

We are now in the position to prove existence of suitably converging subsequence under assumption \eqref{eq_ass_conv}.

\begin{theorem}\label{thm310}
Assume \eqref{eq_ass_conv}.
Let $(u_k)$, $k\in \N$, be a family of solutions of \eqref{eq_uk_gk}.
Then there is a subsequence such that
$u_{k_n} \to u^*$, $\lambda^a_{\rho_{k_n}}(u_{k_n})\rightharpoonup \lambda^a$, and $\lambda^b_{\rho_{k_n}}(u_{k_n}) \rightharpoonup \lambda^b$ in $L^2(\Omega)$.
\end{theorem}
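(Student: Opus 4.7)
The proof plan is essentially to assemble the a-priori estimates from Section \ref{sec:a-priori} with the compactness in $BV(\Omega)$ and then to upgrade $L^1$-convergence to $L^2$-convergence via Lemma \ref{lem_strong_conv_L2}.

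First I would observe that the weak convergence $g_k \rightharpoonup g$ in $L^2(\Omega)$ provides a uniform bound $\|g_k\|_{L^2(\Omega)}\le C$. Since $\rho_k\to\infty$, there is some $k_0$ with $\rho_k^2\ge \frac{1}{u_b-u_a}$ for all $k\ge k_0$, so the hypotheses of Lemmas \ref{lem_lambda_bounded}, \ref{lem_bound_W11} and Corollaries \ref{cor_constr_vio}, \ref{cor_bound_L2} are satisfied for all sufficiently large $k$. Applying these one after the other yields
\[
\|\lambda^a_{\rho_k}(u_k)\|_{L^2(\Omega)} + \|\lambda^b_{\rho_k}(u_k)\|_{L^2(\Omega)} \le 2C,\qquad \|u_k\|_{L^2(\Omega)} \le C',\qquad \|\nabla u_k\|_{L^1(\Omega)}\le C''
\]
for uniform constants (the last bound uses $\epsilon_k\le\epsilon_0\le 1$). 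Hence $(u_k)$ is bounded in $BV(\Omega)\cap L^2(\Omega)$.

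Next I would extract subsequences. From the compact embedding $BV(\Omega)\hookrightarrow L^1(\Omega)$ in Proposition \ref{prop:BV}(1) together with the $L^2$-bound, a (not relabeled) subsequence satisfies $u_{k_n}\rightharpoonup u^*$ in $L^2(\Omega)$ and $u_{k_n}\to u^*$ in $L^1(\Omega)$ for some $u^*\in BV(\Omega)\cap L^2(\Omega)$. Since the multiplier sequences are bounded in the Hilbert space $L^2(\Omega)$, a further subsequence (again not relabeled) yields $\lambda^a_{\rho_{k_n}}(u_{k_n})\rightharpoonup \lambda^a$ and $\lambda^b_{\rho_{k_n}}(u_{k_n})\rightharpoonup \lambda^b$ in $L^2(\Omega)$ by the Eberlein--\v{S}mulian theorem.

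Finally I would upgrade the $L^1$-convergence $u_{k_n}\to u^*$ to strong convergence in $L^2(\Omega)$ by invoking Lemma \ref{lem_strong_conv_L2} directly, which is exactly set up for this situation. This gives all three convergences claimed in the theorem. I do not expect any serious obstacle: the only non-routine ingredient is the $L^1\to L^2$ upgrade, and that work has already been isolated into Lemma \ref{lem_strong_conv_L2}.
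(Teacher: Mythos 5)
Your proposal is correct and follows essentially the same route as the paper's own proof: derive the uniform bounds from Lemma \ref{lem_lambda_bounded}, Corollary \ref{cor_bound_L2}, and Lemma \ref{lem_bound_W11}, extract an $L^1$-convergent subsequence via the compact embedding in Proposition \ref{prop:BV}, upgrade to $L^2$-convergence with Lemma \ref{lem_strong_conv_L2}, and then pull out weakly convergent subsequences of the multipliers. Your version is slightly more explicit about the preliminary steps (boundedness of $(g_k)$ from weak convergence, and that $\rho_k^2 \ge \frac{1}{u_b-u_a}$ holds only for large $k$), which the paper leaves implicit.
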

\begin{proof}
 By Lemmas \ref{lem_lambda_bounded}, Corollary \ref{cor_bound_L2},  and Lemma \ref{lem_bound_W11},  $(u_k)$ is bounded in $BV(\Omega){\cap L^2(\Omega)}$, and $(\lambda^a_{\rho_k}(u_k))$ and $(\lambda^b_{\rho_k}(u_k))$
are bounded in $L^2(\Omega)$.
Then we can choose $(u_{k})$ as a subsequence that converges strongly in $L^1(\Omega)$ by Proposition \ref{prop:BV}.
Due to Lemma \ref{lem_strong_conv_L2} this convergence is strong in $L^2(\Omega)$.
Now, extracting additional weakly converging subsequences from $(\lambda^a_{\rho_k}(u_k))$ and $(\lambda^b_{\rho_k}(u_k))$ finishes the proof.
\end{proof}

The next result shows that limit points of $(u_k,\lambda^a_{\rho_k}(u_k),\lambda^b_{\rho_k}(u_k))$
satisfy the usual complementarity conditions.

\begin{lemma} \label{lem:complemtarity_cond_prelim}
Assume \eqref{eq_ass_conv}.
Let $u_k$, $k\in \N$, be a solution of \eqref{eq_uk_gk}.
	Let $u_k \to u^*$, $\lambda^a_{\rho_k}(u_k)\rightharpoonup \lambda^a$, and $\lambda^b_{\rho_k}(u_k) \rightharpoonup \lambda^b$ in $L^2(\Omega)$. Then it follows that
	\[
	(\lambda^b,\,u^*-u_b) = 0 \text{ and } (\lambda^a,\,u_a-u^*) = 0.
	\]
\end{lemma}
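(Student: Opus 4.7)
The plan is to prove each complementarity identity by sandwiching the limit integral between a non-positive and a non-negative quantity.

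First I would observe two sign facts about the limits. Since $\max_\rho(\cdot) \ge 0$ pointwise, the sequences $(\lambda^a_{\rho_k}(u_k))$ and $(\lambda^b_{\rho_k}(u_k))$ are non-negative in $L^2(\Omega)$, hence so are their weak $L^2$-limits $\lambda^a$ and $\lambda^b$. Moreover, Corollary \ref{cor_constr_vio} combined with the strong $L^2$-convergence $u_k\to u^*$ (used in the proof of Lemma \ref{lem_strong_conv_L2}) gives $u_a \le u^*\le u_b$ a.e., hence $u^*-u_b\le 0$ and $u_a-u^*\le 0$ a.e.\ in $\Omega$. This already yields the upper bounds $(\lambda^b, u^*-u_b)\le 0$ and $(\lambda^a, u_a-u^*)\le 0$, so it remains to prove the matching lower bounds.

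For this I would pass to the limit in the prepaired quantity $\int_\Omega \lambda^b_{\rho_k}(u_k)(u_k-u_b)\dx$. The pointwise key estimate is: for every $\rho>0$ and $x\in\R$,
\[
{\max}_\rho(x)\,x \;\ge\; -\tfrac{1}{2\rho}\,{\max}_\rho(x),
\]
which is trivial where ${\max}_\rho(x)=0$ (that is, where $x\le -\tfrac1{2\rho}$), and on the complement follows from $x+\tfrac1{2\rho}>0$ and ${\max}_\rho(x)>0$. Applied with $\rho=\rho_k$ and $x=\rho_k(u_k-u_b)$, this rescales to
\[
\lambda^b_{\rho_k}(u_k)\bigl(u_k-u_b\bigr) \;\ge\; -\tfrac{1}{2\rho_k^{2}}\,\lambda^b_{\rho_k}(u_k)\qquad\text{a.e.\ in }\Omega.
\]
Integrating and using Lemma \ref{lem_lambda_bounded} to bound $\|\lambda^b_{\rho_k}(u_k)\|_{L^2(\Omega)}$ (hence $\|\lambda^b_{\rho_k}(u_k)\|_{L^1(\Omega)}$) uniformly in $k$, I obtain
\[
\int_\Omega \lambda^b_{\rho_k}(u_k)\,(u_k-u_b)\dx \;\ge\; -\tfrac{C}{\rho_k^{2}} \;\longrightarrow\; 0 \quad\text{as }\rho_k\to\infty.
\]
The left-hand side converges to $\int_\Omega \lambda^b (u^*-u_b)\dx$ by the standard weak-times-strong pairing (weak $L^2$-convergence of $\lambda^b_{\rho_k}(u_k)$ against strong $L^2$-convergence of $u_k-u_b$ to $u^*-u_b$). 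Combining this lower bound $\ge 0$ with the upper bound $\le 0$ from the previous paragraph yields $(\lambda^b,u^*-u_b)=0$.

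The identity $(\lambda^a,u_a-u^*)=0$ follows by the symmetric argument, using $\lambda^a_{\rho_k}(u_k)={\max}_{\rho_k}(\rho_k(u_a-u_k))$ and the same rescaled inequality for $\max_\rho$. I do not expect genuine obstacles: the only subtle point is that $\max_\rho(x)\,x$ can be slightly negative on $[-\tfrac1{2\rho},0]$, which forces the remainder term $-\tfrac{1}{2\rho_k^2}\lambda^b_{\rho_k}(u_k)$, but the uniform $L^1$-bound on the multipliers together with the quadratic decay in $\rho_k$ kills this harmlessly.
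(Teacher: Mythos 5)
Your proof is correct, but it takes a genuinely different route from the paper's. The paper bounds the prelimit pairing in absolute value from above: using the upper estimate in \eqref{eq:est_max_rho} it gets $|(\lambda^b_{\rho_k}(u_k),u_k-u_b)| \le \rho_k\|(u_k-u_b)_+\|_{L^2(\Omega)}^2 + \frac{1}{2\rho_k}\|u_k-u_b\|_{L^1(\Omega)}$, where the first term tends to zero only because Corollary \ref{cor_constr_vio} provides the quantitative rate $\|(u_k-u_b)_+\|_{L^2(\Omega)} \le 2\rho_k^{-1}\|g_k\|_{L^2(\Omega)}$, strong enough to beat the factor $\rho_k$; weak-times-strong convergence then identifies the limit as zero. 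You instead sandwich: the upper bound $(\lambda^b,u^*-u_b)\le 0$ comes from sign information on the limits ($\lambda^b\ge 0$ as a weak limit of non-negative functions, $u^*\le u_b$ a.e., with Corollary \ref{cor_constr_vio} used only qualitatively), and the lower bound comes from your pointwise inequality ${\max}_\rho(x)\,x\ge -\tfrac{1}{2\rho}{\max}_\rho(x)$, which after rescaling requires only the uniform $L^1(\Omega)$-bound on the multipliers (from Lemma \ref{lem_lambda_bounded}, or indeed already from the weak-convergence hypothesis) and $\rho_k^{-2}\to 0$. Your route is slightly more economical in that its main estimate never needs the $O(\rho_k^{-1})$ decay of the constraint violation, only that it vanishes; the paper's route yields directly the stronger prelimit statement $(\lambda^b_{\rho_k}(u_k),u_k-u_b)\to 0$ --- the very quantity monitored by the residual $R^\rho_k$ in Section \ref{sec5} --- without any sign analysis of the weak limits, whereas your argument recovers that convergence only a posteriori by combining the two bounds. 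Both proofs implicitly restrict to $k$ large enough that $\rho_k^2\ge \frac{1}{u_b-u_a}$, so that Lemma \ref{lem_lambda_bounded} and Corollary \ref{cor_constr_vio} apply; leaving this threshold tacit is harmless.
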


\begin{proof}
Due to  \eqref{eq_ass_conv},  $(g_k)$ is bounded in $L^2(\Omega)$.
	We deduce using \eqref{eq:est_max_rho}
	\begin{align*}
	|(\lambda^b_{\rho_k}(u_k),\,u_k-u_b)| &\le  \int_\Omega {\max}_{\rho_k}(\rho_k(u_k-u_b))|u_k-u_b|\dx \\
	&\le \int_\Omega\left(\rho_k(u_k-u_b)_++\frac1{2\rho_k}\right)|u_k-u_b|\dx\\
	& = \rho_k\int_\Omega(u_k-u_b)_+(u_k-u_b)\dx +\int_{\Omega}\frac{1}{2\rho_k}|u_k-u_b|\dx \\
	& = \rho_k \|(u_k-u_b)_+\|_{L^2(\Omega)}^2 +\frac{1}{2\rho_k}\|u_k-u_b\|_{L^1(\Omega)}.
	\end{align*}
Due to Corollary \ref{cor_constr_vio} and the boundedness of $(u_k)$ in $L^2(\Omega)$, both expressions tend to zero for $k\to\infty$.
This implies
	\[
	(\lambda^b,\,u^*-u_b)  =\lim_{k\to\infty}	(\lambda^b_{\rho_k}(u_k),u_k-u_b) =0.
	\]
	The same argumentation yields the claim for $(\lambda^a,\,u_a-u^*)$.
\end{proof}

We will now  show that weak limit points of solutions to \eqref{eq_uk_gk} satisfy a  stationary  condition similar to the one for the original problem \eqref{P_BV}.
We will utilize this result twice: first we apply it to iterates of Algorithm \ref{alg1}, second we will use it to prove a  optimality condition for \eqref{P_BV} that
has a different structure than that of Theorem \ref{thm_FONC_BV}.

\begin{theorem} \label{thm:final_prelim}
Assume \ref{ass:A1}--\ref{ass:A5} and \eqref{eq_ass_conv}.
Let $(u_k)$, $k\in \N$, be a family of solutions of \eqref{eq_uk_gk}.
	Let $u_k \to u^*$, $\lambda^a_{\rho_k}(u_k)\rightharpoonup \lambda^a$, and $\lambda^b_{\rho_k}(u_k) \rightharpoonup \lambda^b$ in $L^2(\Omega)$.
Then it holds
\begin{equation}\label{eq_osys_comp}
\begin{aligned}
 u^* & \in U_{ad} \\
 \lambda^a & \ge 0, & \lambda^b &\ge0, \\
  (\lambda^a,\,u_a-u^*)&= 0, &   (\lambda^b,\,u^*-u_b)&= 0.
 \end{aligned}
\end{equation}
In addition, there is  $\mu^*\in L^\infty(\Omega)^n$ with $\Div \mu^*\in L^2(\Omega)$  such that
	\[
	-\Div \mu^* -\lambda^a+\lambda^b =g
	\]
and
\[
 -\Div \mu^* \in \partial(\BVSN{\cdot})(u^*).
\]
Moreover, there is $\lambda^* \in \partial\|\cdot\|_{\mathcal{M}{(\Omega)}}(\nabla u^*) \subset (\mathcal{M}(\Omega)^n)^*$ with  $\Div \lambda^*\in L^2(\Omega)$ such that
	\[
	-\Div \lambda^* -\lambda^a+\lambda^b =g.
	\]
%
\end{theorem}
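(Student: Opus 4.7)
The proof splits along the three claims of the theorem. The feasibility and complementarity block \eqref{eq_osys_comp} is essentially already in place: the inclusion $u^*\in U_{ad}$ follows by combining Corollary \ref{cor_constr_vio} with the assumed strong $L^2$ convergence $u_k\to u^*$; non-negativity of $\lambda^a$ and $\lambda^b$ is preserved under weak $L^2$ limits since $\max_\rho\ge 0$; and the two orthogonality identities are precisely the content of Lemma \ref{lem:complemtarity_cond_prelim}.

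For the $\mu^*$-assertion the plan is to start from the decomposition
\[
\psi'_{\epsilon_k}(\nabla u_k)=\nu_k+2\epsilon_k\nabla u_k,\qquad \nu_k:=\frac{\nabla u_k}{\sqrt{\epsilon_k+|\nabla u_k|^2}}.
\]
Since $|\nu_k|\le 1$ pointwise, a subsequence satisfies $\nu_k\rightharpoonup^*\mu^*$ in $L^\infty(\Omega)^n$ with $|\mu^*|\le 1$ a.e., while the residual $2\epsilon_k\nabla u_k$ tends to zero in $L^1(\Omega)^n$ thanks to the uniform BV-bound of Lemma \ref{lem_bound_W11}. Testing \eqref{eq_uk_gk} with $v\in C^\infty_0(\Omega)$ and passing to the limit gives $-\Div\mu^*=g+\lambda^a-\lambda^b$ in the distributional sense; since the right-hand side lies in $L^2(\Omega)$, so does $\Div\mu^*$, and the asserted PDE follows. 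For the subgradient inclusion $-\Div\mu^*\in\partial(\BVSN{\cdot})(u^*)$, the convexity of $\psi_{\epsilon_k}$ yields, for any $w\in H^1(\Omega)$,
\[
\int_\Omega\psi_{\epsilon_k}(\nabla w)\dx\ge\int_\Omega\psi_{\epsilon_k}(\nabla u_k)\dx+\int_\Omega\psi'_{\epsilon_k}(\nabla u_k)\cdot(\nabla w-\nabla u_k)\dx.
\]
Using \eqref{eq_uk_gk} the last integral equals $\int_\Omega(g_k+\lambda^a_{\rho_k}(u_k)-\lambda^b_{\rho_k}(u_k))(w-u_k)\dx$, which tends to $\int_\Omega(-\Div\mu^*)(w-u^*)\dx$ by the strong $L^2$ convergence of $u_k$ together with the weak $L^2$ convergence of the other factors. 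Dominated convergence sends the left-hand side to $\BVSN{w}$ (using $\psi_\epsilon(\nabla w)\searrow|\nabla w|$), while the term $\int\psi_{\epsilon_k}(\nabla u_k)\dx$ satisfies $\liminf\ge\liminf\BVSN{u_k}\ge\BVSN{u^*}$ by Lemma \ref{ass:B2} and Proposition \ref{prop:BV}(2). The resulting subgradient inequality
\[
\BVSN{w}\ge\BVSN{u^*}+\int_\Omega(-\Div\mu^*)(w-u^*)\dx
\]
therefore holds for $w\in H^1(\Omega)$ and is extended to $w\in BV(\Omega)\cap L^2(\Omega)$ via the intermediate-convergence density of Proposition \ref{prop:BV}(3).

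For the $\lambda^*$-assertion the plan is to invoke the convex chain rule already employed in the proof of Theorem \ref{thm_FONC_BV}: since $\BVSN{\cdot}=\|\cdot\|_{\mathcal{M}(\Omega)^n}\circ\nabla$ and $\|\cdot\|_{\mathcal{M}(\Omega)^n}$ is finite and continuous on $\mathcal{M}(\Omega)^n$, no extra constraint qualification is needed and
\[
\partial(\BVSN{\cdot})(u^*)=-\Div\bigl(\partial\|\cdot\|_{\mathcal{M}(\Omega)^n}(\nabla u^*)\bigr).
\]
Applying this identity to the element $-\Div\mu^*$ already in the left-hand set produces some $\lambda^*\in\partial\|\cdot\|_{\mathcal{M}(\Omega)^n}(\nabla u^*)$ with $-\Div\lambda^*=-\Div\mu^*=g+\lambda^a-\lambda^b\in L^2(\Omega)$, which is exactly the last statement. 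The main technical nuisances, in my view, are controlling the penalty remainder $2\epsilon_k\nabla u_k$ (which only vanishes in $L^1(\Omega)^n$, so the divergence equation must first be derived distributionally against compactly supported test functions before being promoted to an $L^2$-identity by reading off the regularity of the right-hand side), and the liminf/dominated-convergence bookkeeping that allows the smoothed functionals $\int\psi_{\epsilon_k}(\nabla\cdot)\dx$ to be replaced by $\BVSN{\cdot}$ on both sides of the convexity inequality.
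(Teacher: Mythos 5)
Your proposal is correct and takes essentially the same route as the paper's proof: the same reduction of \eqref{eq_osys_comp} to Corollary \ref{cor_constr_vio}, Lemma \ref{lem:complemtarity_cond_prelim} and the sign of $\max_\rho$; the same weak-$*$ $L^\infty$ limit of $\nu_k$ (the paper's $\mu_k$); the same convexity inequality combined with \eqref{eq_uk_gk} to identify the middle term, the same lower-semicontinuity and intermediate-convergence density arguments; and the same chain rule to produce $\lambda^*$. The only cosmetic difference is the treatment of the remainder $2\epsilon_k\nabla u_k$: you send it to zero in $L^1(\Omega)^n$ using the uniform bound from Lemma \ref{lem_bound_W11}, whereas the paper integrates by parts onto $\Delta v$ and uses only the $L^2(\Omega)$ bound on $(u_k)$ --- both are valid.
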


\begin{proof}
The system \eqref{eq_osys_comp} is a consequence of Corollary \ref{cor_constr_vio}, Lemma \ref{lem:complemtarity_cond_prelim},
and the non-negativity of $\max_\rho$.
In order to pass to the limit in \eqref{eq_uk_gk},
we need to analyze the term involving $\psi_{\epsilon_k}'(\nabla u_k)$.
Here, we argue similar as in the proof of \cite[Theorem 10]{CasasKunisch_ocsl_BV}.
Let $v\in C_c^\infty(\Omega)$.
Then, we find
\[
 \int_\Omega \psi_{\epsilon_k}'(\nabla u_k)\nabla v \dx
 =\int_\Omega \frac{\nabla u_k}{\sqrt{\epsilon_k+|\nabla u_k|^2}} \nabla v + 2\epsilon \nabla u_k \nabla v \dx
 = \int_\Omega \frac{\nabla u_k}{\sqrt{\epsilon_k+|\nabla u_k|^2}} \nabla v - 2\epsilon u_k \Delta v \dx.
\]
		Let us define $ \mu_k\in L^\infty(\Omega)$ by
		\[
		\mu_k:=\frac{\nabla u_k}{\sqrt{\epsilon_k+|\nabla u_k|^2}}.
		\]
		Clearly, the sequence $(\mu_k)$ is bounded in $L^\infty(\Omega)^n$,
		and there exists a subsequence converging weak-star in $L^\infty(\Omega)^n$.
		W.l.o.g.\@ we can assume $\mu_k\rightharpoonup^* \mu^* $ in $L^\infty(\Omega)^n$.
		Since $(u_k)$ is bounded in $L^2(\Omega)$, we obtain
		\[
		  \lim_{k\to\infty} \int_\Omega \psi_{\epsilon_k}'(\nabla u_k)\nabla v \dx
		  = \lim_{k\to\infty} \int_\Omega \mu_k \nabla v - 2\epsilon u_k \Delta v \dx
		  = \int_\Omega\mu^*\nabla v\dx.
		\]
Then we can pass to the limit in \eqref{eq_uk_gk} to find
\[
		\int_\Omega\mu^*\nabla v -\lambda^a v + \lambda^b v \dx= \int_\Omega gv \dx
\]
which is satisfied for all $v\in C_c^\infty(\Omega)$.
This implies
	\[
	-\Div \mu^* = g + \lambda^a - \lambda^b  \in L^2(\Omega).
	\]
	Let now  $v\in C^\infty(\bar \Omega)$. By convexity of $\psi_\epsilon$, we have
	\begin{equation}\label{eq_subgrad_ineq_psik}
	 \int_\Omega\psi_{\epsilon_k}(\nabla u_k) +  \psi'_{\epsilon_k}(\nabla u_k)(\nabla v-\nabla u_k)\dx \le \int_\Omega\psi_{\epsilon_k}(\nabla v)\dx.
	\end{equation}
	Here, we find $\int_\Omega\psi_{\epsilon_k}(\nabla v)\dx \to \|\nabla v\|_{L^1(\Omega)}$ and
	\[
	\liminf_{k\to\infty} \int_\Omega\psi_{\epsilon_k}(\nabla u_k) \dx \ge \liminf_{k\to\infty}   \|\nabla u_k\|_{L^1(\Omega)} \ge \BVSN{u^*},
	\]
	cf.\@, Proposition \ref{prop:BV}.
	Here, we used that $(u_k)$ is bounded in $BV(\Omega)$ due to Lemma \ref{lem_bound_W11} and \eqref{eq_ass_conv}.
	Using the equation \eqref{eq_uk_gk}, we find
	\[\begin{split}
	  \int_\Omega  \psi'_{\epsilon_k}(\nabla u_k)(\nabla v-\nabla u_k)\dx
	  & = \int_\Omega (g_k +\lambda^a_{\rho_k}(u_k) -\lambda_{\rho_k}^b(u_k))(v-u_k)\dx\\
 	  &\to \int_\Omega (g+\lambda^a -\lambda^b)(v-u^*) \dx
 	  = \int_\Omega -\Div \mu^* (v-u^*) \dx.
	\end{split}\]
	Then we can pass to the limit in \eqref{eq_subgrad_ineq_psik} to obtain
	\[
	 \BVSN{u^*} +  \int_\Omega -\Div \mu^* (v-u^*) \dx \le \BVSN{v}
	\]
	for all $v\in C^\infty(\bar \Omega)$.
	Due to the density result of Proposition \ref{prop:BV} with respect to intermediate convergence \eqref{eq_intermed_conv},
	the inequality holds for all $v\in BV(\Omega) \cap L^2(\Omega)$.
	Consequently,  $-\Div \mu^* \in \partial(\BVSN{\cdot})(u^*) \subset (BV(\Omega)\cap L^2(\Omega))^*$.
	Using the chain rule as in Theorem \ref{thm_FONC_BV}, we find
	\[
	 -\Div \mu^* \in -\Div \left( \partial\|\cdot\|_{\mathcal{M}{(\Omega)}}(\nabla u^*)\right),
	\]
	which proves the existence of $\lambda^*$ with the claimed properties.
\end{proof}

\subsection{Convergence of iterates}

We are now going to apply the results of the previous two sections to the iterates of  Algorithm \ref{alg1}.
In terms of \eqref{eq_uk_gk}, we have to set $g_k:=\nabla f(u_k)$.
As can be seen from, e.g., Theorem \ref{thm:final_prelim}, the boundedness of $(\nabla f(u_k))$ in $L^2(\Omega)$ will be crucial for any convergence analysis.
Unfortunately, this boundedness can only be guaranteed in exceptional cases.
Here, we prove it under the assumption that $\nabla f$ is globally Lipschitz continuous.
In Section \ref{sec:global} we show that convexity of $f$ or global optimality of $u_k$ is sufficient.

\begin{lemma}
Let $\nabla f: L^2(\Omega) \to L^2(\Omega)$ be globally Lipschitz continuous with modulus $L_f$.
Assume $\rho_k\to\infty$.
Then $(u_k)$ and $(\nabla f(u_k))$ are bounded in $L^2(\Omega)$.
\end{lemma}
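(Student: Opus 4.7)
The plan is to combine the a-priori bound from Corollary \ref{cor_bound_L2} with the global Lipschitz estimate on $\nabla f$ and then absorb the $\|u_k\|_{L^2(\Omega)}$ term using that $\rho_k\to\infty$.

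First, I would observe that since $\rho_k\to\infty$ and $u_b-u_a>0$ is a constant (by \ref{ass:A5}), we have $\rho_k^2\ge \frac{1}{u_b-u_a}$ for all sufficiently large $k$, so Corollary \ref{cor_bound_L2} applied to $g_k:=\nabla f(u_k)$ yields
\[
\|u_k\|_{L^2(\Omega)} \le 2\rho_k^{-1}\|\nabla f(u_k)\|_{L^2(\Omega)} + \|\max(|u_a|,|u_b|)\|_{L^2(\Omega)}.
\]
Next, the global Lipschitz property gives
\[
\|\nabla f(u_k)\|_{L^2(\Omega)} \le \|\nabla f(0)\|_{L^2(\Omega)} + L_f\|u_k\|_{L^2(\Omega)}.
\]

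Substituting this into the previous estimate, I would choose $k_0$ large enough so that $2L_f\rho_k^{-1}\le \tfrac12$ for all $k\ge k_0$, and then absorb the resulting $\tfrac12\|u_k\|_{L^2(\Omega)}$ on the left-hand side. This gives
\[
\tfrac12\|u_k\|_{L^2(\Omega)} \le 2\rho_k^{-1}\|\nabla f(0)\|_{L^2(\Omega)} + \|\max(|u_a|,|u_b|)\|_{L^2(\Omega)}
\]
for all $k\ge k_0$, proving that $(u_k)_{k\ge k_0}$ is bounded in $L^2(\Omega)$; together with the finitely many remaining iterates (which are fixed elements of $L^2(\Omega)$), the full sequence $(u_k)$ is bounded. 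The boundedness of $(\nabla f(u_k))$ then follows immediately from the Lipschitz estimate.

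There is no real obstacle here since the ingredients are ready: the key point is simply that Corollary \ref{cor_bound_L2} has the factor $\rho_k^{-1}$ multiplying the right-hand side, so the Lipschitz growth of $\nabla f$ in $u_k$ is dominated in the limit $\rho_k\to\infty$, allowing the standard absorption trick.
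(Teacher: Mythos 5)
Your proposal is correct and follows essentially the same argument as the paper: apply Corollary \ref{cor_bound_L2} with $g_k$ given by the gradient term, insert the global Lipschitz estimate $\|\nabla f(u_k)\|_{L^2(\Omega)} \le L_f\|u_k\|_{L^2(\Omega)} + \|\nabla f(0)\|_{L^2(\Omega)}$, and absorb the $2\rho_k^{-1}L_f\|u_k\|_{L^2(\Omega)}$ term once $\rho_k$ is large enough. Your explicit remark that the finitely many initial iterates are individually bounded is a minor point the paper leaves implicit; otherwise the two proofs coincide.
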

\begin{proof}
Due to the Lipschitz continuity of $\nabla f$, we have
\[
 \|\nabla f(u_k)\|_{L^2(\Omega)} \le L_f  \|u_k\|_{L^2(\Omega)}  + \|\nabla f(0)\|_{L^2(\Omega)}.
\]
By Corollary \ref{cor_bound_L2}, we find for $k$ sufficiently large
\[
\begin{split}
	\|u_k\|_{L^2(\Omega)} & \le 2\rho_k^{-1}\|\nabla f(u_k)\|_{L^2(\Omega)} + \| \max(|u_a|,|u_b|)\|_{L^2(\Omega)}\\
	&\le  2\rho_k^{-1}L_f \|u_k\|_{L^2(\Omega)} + 2\rho_k^{-1}\|\nabla f(0)\|_{L^2(\Omega)}+ \| \max(|u_a|,|u_b|)\|_{L^2(\Omega)}.
	\end{split}
\]
If $k$ is such that $2\rho_k^{-1}L_f  < \frac12$, then
$
 \|u_k\|_{L^2(\Omega)} \le 4\rho_k^{-1}\|\nabla f(0)\|_{L^2(\Omega)}+ 2\| \max(|u_a|,|u_b|)\|_{L^2(\Omega)},
$
which proves the claim.
\end{proof}
The next observation is a simple consequence of previous results and shows the close relation between boundedness of $(u_k)$ in $L^2(\Omega)$ and $BV(\Omega)$  and the boundedness of $(\nabla f(u_k))$ in $L^2(\Omega)$.

\begin{lemma} \label{lem:L2conv}
	Assume $\rho_k\to\infty$.
	Then the following statements are equivalent:
\begin{enumerate}[label=(\arabic*)]
\item\label{lem314_1} 	$(\nabla f(u_k))$ is bounded in $L^2(\Omega)$,
\item\label{lem314_2} 	$(u_k)$ is bounded in $L^2(\Omega)$ and $BV(\Omega)$,
\item\label{lem314_3} $\{u_k: \ k\in \mathbb N\}$ is pre-compact in $L^2(\Omega)$.
\end{enumerate}

\end{lemma}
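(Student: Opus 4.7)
My plan is to prove the equivalences by the cycle $(1)\Rightarrow(2)\Rightarrow(3)\Rightarrow(1)$, where each step relies on the a priori bounds from Section~\ref{sec:a-priori}.

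For $(1)\Rightarrow(2)$, I set $g_k:=\nabla f(u_k)$, so $u_k$ solves \eqref{eq_uk_gk}. Since $\rho_k\to\infty$, the hypothesis $\rho_k^2\ge 1/(u_b-u_a)$ of Corollary~\ref{cor_bound_L2} holds for $k$ large, and that corollary yields
\[
\|u_k\|_{L^2(\Omega)} \le 2\rho_k^{-1}\|\nabla f(u_k)\|_{L^2(\Omega)} + \|\max(|u_a|,|u_b|)\|_{L^2(\Omega)},
\]
which is bounded under (1). Lemma~\ref{lem_bound_W11} then bounds $\|\nabla u_k\|_{L^1(\Omega)}$; combined with the $L^1$-bound inherited from the $L^2$-bound on the bounded domain $\Omega$, this yields (2). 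For $(3)\Rightarrow(1)$, pre-compactness of $\{u_k\}$ in $L^2(\Omega)$ makes its closure compact, and the continuity of $\nabla f\colon L^2(\Omega)\to L^2(\Omega)$ from Assumption~\ref{ass:A4} forces $\{\nabla f(u_k)\}$ into a compact, hence bounded, subset of $L^2(\Omega)$.

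The bulk of the work is in $(2)\Rightarrow(3)$. Given a subsequence of $(u_k)$, the compact embedding $BV(\Omega)\hookrightarrow L^1(\Omega)$ from Proposition~\ref{prop:BV} provides a sub-subsequence $u_{k_n}\to u^*$ in $L^1(\Omega)$ and a.e. Via the decomposition
\[
u_{k_n} = (u_{k_n}-u_b)_+ - (u_a - u_{k_n})_+ + \proj_{[u_a,u_b]}(u_{k_n}),
\]
the projection part is uniformly bounded in $L^\infty(\Omega)$ and converges in every $L^p(\Omega)$, $p<\infty$, by dominated convergence. The remaining task---$L^2$-convergence of the two obstacle-violation parts---is the main technical obstacle, since $L^2\cap BV$-boundedness alone does not entail $L^2$-compactness when $n\ge 2$. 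My strategy is to reroute through $(2)\Rightarrow(1)\Rightarrow(3)$: arguing by contradiction, I suppose $\|\nabla f(u_k)\|_{L^2(\Omega)}\to\infty$ along a subsequence and divide the equation \eqref{eq_uk_gk} by this norm. Lemma~\ref{lem_lambda_bounded} gives a uniform $L^2$-bound on the rescaled multipliers, Lemma~\ref{lem_lalb_disjoint} grants them disjoint supports, and the rescaled constraint-violation estimate of Corollary~\ref{cor_constr_vio} provides further control. Passing to weak limits in the rescaled equation---while carefully treating the $\psi'_{\epsilon_k}(\nabla u_k)$-term as in the proof of Theorem~\ref{thm:final_prelim}, where the rescaling kills its contribution---one aims to derive a contradiction with the fact that the normalized sequence $\nabla f(u_k)/\|\nabla f(u_k)\|_{L^2(\Omega)}$ has unit $L^2$-norm. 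Making this rescaling and contradiction argument fully rigorous is where the greatest difficulty lies.
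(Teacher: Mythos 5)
Your cycle $(1)\Rightarrow(2)\Rightarrow(3)\Rightarrow(1)$ has the same skeleton as the paper's proof, and two of the three legs coincide with it: $(1)\Rightarrow(2)$ via Corollary~\ref{cor_bound_L2} and Lemma~\ref{lem_bound_W11}, and $(3)\Rightarrow(1)$ via continuity of $\nabla f$ (the continuous image of a pre-compact set is pre-compact, hence bounded). The genuine gap is $(2)\Rightarrow(3)$: you do not prove it, you only announce a strategy and concede that making it rigorous ``is where the greatest difficulty lies.'' The paper completes exactly the route you set up and then abandoned: by Proposition~\ref{prop:BV} every subsequence of $(u_k)$ has a sub-subsequence converging in $L^1(\Omega)$, and Lemma~\ref{lem_strong_conv_L2} --- whose proof is the decomposition \eqref{eq35}, Corollary~\ref{cor_constr_vio} for the two constraint-violation parts, and the $L^\infty(\Omega)$-bound on the projection part --- upgrades this to convergence in $L^2(\Omega)$, which is pre-compactness. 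Your instinct that this invocation is delicate is not unfounded: Lemma~\ref{lem_strong_conv_L2} is stated under \eqref{eq_ass_conv}, which presupposes an $L^2(\Omega)$-bound on $g_k=\nabla f(u_k)$, i.e.\ essentially statement (1), and the paper's two-line citation passes over this point. But whatever one thinks of that, your replacement argument does not close the implication either.

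Concretely, the rescaling/contradiction scheme cannot work. Set $t_k:=\|\nabla f(u_k)\|_{L^2(\Omega)}$ and suppose $t_k\to\infty$ along a subsequence. Dividing \eqref{eq_uk_gk} by $t_k$, Lemma~\ref{lem_lambda_bounded} does bound $\lambda^a_{\rho_k}(u_k)/t_k$ and $\lambda^b_{\rho_k}(u_k)/t_k$ in $L^2(\Omega)$, and for fixed $v\in C_c^\infty(\Omega)$ the term $t_k^{-1}\int_\Omega\psi'_{\epsilon_k}(\nabla u_k)\nabla v\dx$ indeed vanishes in the limit (the first summand of $\psi'_{\epsilon_k}$ is bounded by $1$, the second is controlled by $\epsilon_k\|\nabla u_k\|_{L^1(\Omega)}$, which (2) keeps bounded). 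But then all that survives is
\[
\int_\Omega \bigl(\tilde\lambda^b-\tilde\lambda^a\bigr)v\dx=\int_\Omega h v\dx \quad\forall v\in C_c^\infty(\Omega),
\]
where $h$, $\tilde\lambda^a$, $\tilde\lambda^b$ are weak $L^2(\Omega)$-limits of $\nabla f(u_k)/t_k$, $\lambda^a_{\rho_k}(u_k)/t_k$, $\lambda^b_{\rho_k}(u_k)/t_k$. The identity $h=\tilde\lambda^b-\tilde\lambda^a$ is perfectly consistent; in particular $h=\tilde\lambda^a=\tilde\lambda^b=0$ is admissible. It cannot contradict $\|\nabla f(u_k)/t_k\|_{L^2(\Omega)}=1$, because the unit sphere of $L^2(\Omega)$ is not weakly closed: a normalized sequence may converge weakly to $0$, the norm being only weakly lower semicontinuous. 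Turning ``unit norm'' into information about the limit would require strong convergence of $\nabla f(u_k)/t_k$, or some compactness of $\nabla f$, which is exactly what is unavailable under Assumption~\ref{ass:A4}. So the step $(2)\Rightarrow(1)$ you aim for is not established, and with it the equivalence remains unproven.
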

\begin{proof}

\ref{lem314_1} $\Rightarrow$ \ref{lem314_2}:
The boundedness of $(u_k)$
is a direct consequence of Corollary \ref{cor_bound_L2} and Lemma \ref{lem_bound_W11}.
\ref{lem314_2} $\Rightarrow$ \ref{lem314_3}
follows from Proposition \ref{prop:BV} and Lemma \ref{lem_strong_conv_L2}.
\ref{lem314_3} $\Rightarrow$ \ref{lem314_1}:
Since $u\mapsto \nabla f(u)$  is continuous from $L^2(\Omega)$ to $L^2(\Omega)$ by \ref{ass:A4}, the set $\{\nabla f(u_k): \ k\in \mathbb N\}$ is pre-compact in $L^2(\Omega)$
and thus bounded.
\end{proof}

Similarly to Theorem \ref{thm310}, we have the following result on the existence of converging subsequences.

\begin{theorem}
	Suppose $\epsilon_k\searrow0$ and $\rho_k\to\infty$.
	Let $(u_k)$ solve \eqref{OC_eps,c}.
	Assume that $(\nabla f(u_k))$ is bounded in $L^2(\Omega)$.
Then there is a subsequence such that
$u_{k_n} \to u^*$, $\lambda^a_{\rho_{k_n}}(u_{k_n})\rightharpoonup \lambda^a$, and $\lambda^b_{\rho_{k_n}}(u_{k_n}) \rightharpoonup \lambda^b$ in $L^2(\Omega)$.
\end{theorem}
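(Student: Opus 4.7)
The statement is, up to notation, a direct corollary of Theorem \ref{thm310}. Recall that the iterates $(u_k)$ of Algorithm \ref{alg1} solve the variational equation \eqref{OC_eps,c}, which is precisely \eqref{eq_uk_gk} with the particular choice $g_k := \nabla f(u_k)$. Therefore the plan is to verify that the hypotheses \eqref{eq_ass_conv} of Theorem \ref{thm310} are met along a suitable subsequence, and then invoke that theorem.

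First, I would use the standing assumption $\epsilon_k\searrow 0$ and $\rho_k\to\infty$ from the algorithm. Second, and this is the only non-trivial point, I need to produce a weakly convergent subsequence of $(g_k) = (\nabla f(u_k))$ in $L^2(\Omega)$. By the hypothesis of the theorem, $(\nabla f(u_k))$ is bounded in $L^2(\Omega)$; since $L^2(\Omega)$ is reflexive, I can extract a subsequence (without relabeling) and some $g\in L^2(\Omega)$ with $\nabla f(u_k)\rightharpoonup g$ in $L^2(\Omega)$. At this point, the triple $(\epsilon_k,\rho_k,g_k)$ satisfies \eqref{eq_ass_conv}, so all hypotheses of Theorem \ref{thm310} are in force.

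Applying Theorem \ref{thm310} yields a further subsequence $(u_{k_n})$ and limits $u^*\in L^2(\Omega)$, $\lambda^a,\lambda^b\in L^2(\Omega)$ such that
\[
u_{k_n}\to u^*\text{ in } L^2(\Omega),\quad \lambda^a_{\rho_{k_n}}(u_{k_n})\rightharpoonup \lambda^a,\quad \lambda^b_{\rho_{k_n}}(u_{k_n})\rightharpoonup \lambda^b\text{ in } L^2(\Omega),
\]
which is exactly the claim. I do not foresee any real obstacle: the entire content is to observe that the boundedness hypothesis supplies precisely the missing ingredient (weak subsequential convergence of the data $g_k$) needed to reduce to Theorem \ref{thm310}. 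One could additionally remark, for use in the subsequent analysis, that by continuity of $\nabla f:L^2(\Omega)\to L^2(\Omega)$ (Assumption \ref{ass:A4}) the weak limit $g$ must coincide with $\nabla f(u^*)$, but this identification is not required for the present statement.
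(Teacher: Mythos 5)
Your proposal is correct and matches the paper's approach: the paper's own proof is precisely the one-line remark that the result follows ``with similar arguments as Theorem~\ref{thm310}'', and your reduction---extracting a weakly convergent subsequence of the bounded data $g_k$ via reflexivity of $L^2(\Omega)$ so that \eqref{eq_ass_conv} holds, then invoking Theorem~\ref{thm310}---is exactly that argument made precise. The only cosmetic point is the sign convention ($g_k = -\nabla f(u_k)$ when \eqref{OC_eps,c} is written in the form \eqref{eq_uk_gk}, though the paper itself is equally loose here), which affects nothing since boundedness and weak compactness are sign-invariant.
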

\begin{proof}
 This result can be proven with similar arguments as  Theorem \ref{thm310}.
\end{proof}

We finally arrive at the following convergence result for iterates of Algorithm \ref{alg1} which is a consequence of Theorem \ref{thm:final_prelim}.

\begin{theorem} \label{thm:final}
Assume \ref{ass:A1}--\ref{ass:A5}.
	Suppose $\epsilon_k\searrow0$ and $\rho_k\to\infty$.
	Let $(u_k)$ solve \eqref{OC_eps,c}.
	Assume that there is a subsequence with $u_{k_n}\to u^*$, $\lambda_{k_n}^a \rightharpoonup\lambda^a$,  and $\lambda_{k_n}^b \rightharpoonup\lambda^b$ in $L^2(\Omega)$.
	Then it holds
\[
\begin{aligned}
 u^* & \in U_{ad} \\
 \lambda^a & \ge 0, & \lambda^b &\ge0, \\
  (\lambda^a,\,u_a-u^*)&= 0, &   (\lambda^b,\,u^*-u_b)&= 0.
 \end{aligned}
\]
In addition, there is $\mu^*\in L^\infty(\Omega)^n$ with $\Div \mu^*\in L^2(\Omega)$  such that
	\[
	-\Div \mu^* -\lambda^a+\lambda^b = -\nabla f(u^*)
	\]
and
\[
 -\Div \mu^* \in \partial(\BVSN{\cdot})(u^*).
\]
Moreover, there is $\lambda^* \in \partial\|\cdot\|_{\mathcal{M}{(\Omega)}}(\nabla u^*) \subset (\mathcal{M}(\Omega)^n)^*$ with  $\Div \lambda^*\in L^2(\Omega)$ such that
	\[
	-\Div \lambda^* -\lambda^a+\lambda^b =g.
	\]
\end{theorem}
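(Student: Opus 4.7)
The plan is to observe that Theorem \ref{thm:final} is essentially a corollary of Theorem \ref{thm:final_prelim}, obtained by choosing $g_k := -\nabla f(u_k)$ in the variational equation \eqref{eq_uk_gk}. A direct comparison of \eqref{OC_eps,c} with \eqref{eq_uk_gk} shows that the iterates $u_k$ of Algorithm \ref{alg1} solve \eqref{eq_uk_gk} with precisely this choice of right-hand side, so the entire machinery developed in Section \ref{sec_prelim} is available as soon as one verifies the convergence hypothesis $g_k \rightharpoonup g$ in $L^2(\Omega)$ from \eqref{eq_ass_conv}.

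The first step is therefore to argue that $g_{k_n} = -\nabla f(u_{k_n})$ actually converges in $L^2(\Omega)$. Here the hypothesis of the theorem gives strong convergence $u_{k_n} \to u^*$ in $L^2(\Omega)$, and Assumption \ref{ass:A4} ensures that $\nabla f : L^2(\Omega) \to L^2(\Omega)$ is continuous. Consequently $g_{k_n} \to -\nabla f(u^*)$ strongly, hence a fortiori weakly, in $L^2(\Omega)$; define $g := -\nabla f(u^*)$.

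Once this weak convergence is in place, together with the assumed $\epsilon_{k_n} \searrow 0$ and $\rho_{k_n} \to \infty$, the assumption \eqref{eq_ass_conv} of Theorem \ref{thm:final_prelim} is satisfied for the subsequence under consideration. All remaining hypotheses of Theorem \ref{thm:final_prelim} are part of the hypothesis of Theorem \ref{thm:final} (the subsequential convergence of $u_{k_n}$, $\lambda^a_{\rho_{k_n}}(u_{k_n})$, $\lambda^b_{\rho_{k_n}}(u_{k_n})$ in the appropriate topologies). Applying Theorem \ref{thm:final_prelim} then directly yields the admissibility $u^* \in U_{ad}$, the sign and complementarity conditions on $\lambda^a, \lambda^b$, the existence of $\mu^* \in L^\infty(\Omega)^n$ with $\Div \mu^* \in L^2(\Omega)$ satisfying $-\Div \mu^* - \lambda^a + \lambda^b = g = -\nabla f(u^*)$ together with the subdifferential inclusion $-\Div \mu^* \in \partial(\BVSN{\cdot})(u^*)$, and finally the refined representation via $\lambda^* \in \partial \|\cdot\|_{\mathcal{M}(\Omega)}(\nabla u^*)$ with $\Div \lambda^* \in L^2(\Omega)$.

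There is essentially no technical obstacle: the only thing that needs to be checked is that passing from a generic right-hand side $g_k$ to the implicit one $-\nabla f(u_k)$ does not spoil weak convergence, and this is immediate from strong $L^2$-convergence of the subsequence $u_{k_n}$ combined with the Fr\'echet differentiability assumption \ref{ass:A4}. Thus the proof reduces to a short verification of hypotheses followed by an invocation of Theorem \ref{thm:final_prelim}.
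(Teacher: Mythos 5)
Your proposal is correct and takes essentially the same route as the paper: the paper's own proof consists precisely of noting that strong $L^2(\Omega)$-convergence of $u_{k_n}$ together with Assumption \ref{ass:A4} gives $\nabla f(u_{k_n}) \to \nabla f(u^*)$, so that \eqref{eq_ass_conv} holds with $g_k := -\nabla f(u_k)$ and $g := -\nabla f(u^*)$, and then invoking Theorem \ref{thm:final_prelim}. Your write-up is simply a more detailed version of this verification-plus-invocation argument.
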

\begin{proof}
By assumption, we have $\nabla f(u_{k_n}) \to \nabla f(u^*)$.
The proof is now a direct consequence of Theorem \ref{thm:final_prelim}.
\end{proof}

\subsection{Global solutions}\label{sec:global}

The next theorem shows that global optimality is sufficient to obtain boundedness of iterates.
We note that if $f$ is convex, solutions of \eqref{OC_eps,c} are global solutions to the penalized problem
\eqref{Peps_penal}.

\begin{theorem} \label{lem:conv_case}
Assume \ref{ass:A1}--\ref{ass:A5}.
	Suppose $\epsilon_k\searrow0$ and $\rho_k\to\infty$.
	Suppose $(u_{k})$ is the corresponding sequence of global solutions to the penalized problems \eqref{Peps_penal}.
	Then $(u_k)$ is bounded in $BV(\Omega)\cap L^2(\Omega)$.
\end{theorem}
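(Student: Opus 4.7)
The strategy is the classical comparison argument for penalty schemes, combined with the $L^\infty$ bound available from the projection identity \eqref{eq35}. I would test global optimality of $u_k$ against a simple feasible competitor $\tilde u$, deduce from Lemma \ref{ass:B}(2) a uniform $L^1$-bound on $\nabla u_k$, and then extract from the penalty term an $L^2$-bound on the constraint violations.

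Concretely, set $\tilde u := (u_a + u_b)/2 \in C^\infty(\bar\Omega) \cap U_{ad}$. Since $\nabla \tilde u = 0$ we have $\int_\Omega \psi_{\epsilon_k}(\nabla \tilde u)\,\mathrm{d}x = \sqrt{\epsilon_k}\,|\Omega|$, which is bounded. Moreover, for $k$ large enough so that $\rho_k^2 (u_b - u_a) \ge 1$, one has $\rho_k(u_a - \tilde u) \le -1/(2\rho_k)$ and $\rho_k(\tilde u - u_b) \le -1/(2\rho_k)$, so by definition of $\max_\rho$ and $M_\rho$ the penalty term evaluated at $\tilde u$ is zero. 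Combining this with global optimality of $u_k$ for \eqref{Peps_penal} and with the lower bound on $f$ from \ref{ass:A1}, I obtain a uniform constant $C$ with
\[
\int_\Omega \psi_{\epsilon_k}(\nabla u_k)\,\mathrm{d}x + \int_\Omega \frac{1}{\rho_k}\bigl(M_{\rho_k}(\rho_k(u_a - u_k)) + M_{\rho_k}(\rho_k(u_k - u_b))\bigr)\,\mathrm{d}x \le C.
\]

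From the first integral, Lemma \ref{ass:B}(2) gives $\|\nabla u_k\|_{L^1(\Omega)^n} \le C$, so $|u_k|_{BV(\Omega)}$ is bounded. The key step is to extract from the penalty an $L^2$-bound on the constraint violation. Using \eqref{eq:est_max_rho}, I have $\max_{\rho_k} \ge \max(0,\cdot)$, hence for $x > 0$
\[
M_{\rho_k}(x) \ge \int_0^x \max(0,t)\,\mathrm{d}t = \tfrac{1}{2} x^2,
\]
and $M_{\rho_k} \ge 0$ everywhere. Applied pointwise with $x = \rho_k(u_k - u_b)$, this yields
\[
\frac{1}{\rho_k} M_{\rho_k}(\rho_k(u_k - u_b)) \ge \frac{\rho_k}{2}\bigl((u_k - u_b)_+\bigr)^2,
\]
so $\|(u_k - u_b)_+\|_{L^2(\Omega)}^2 \le 2C/\rho_k$, and analogously for $\|(u_a - u_k)_+\|_{L^2(\Omega)}$. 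Both tend to zero.

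Finally, I invoke the identity \eqref{eq35}:
\[
u_k = (u_k - u_b)_+ - (u_a - u_k)_+ + \proj_{[u_a,u_b]}(u_k).
\]
The first two summands are bounded (in fact, vanishing) in $L^2(\Omega)$, while the projection is bounded by $\max(|u_a|,|u_b|)$ in $L^\infty(\Omega) \subset L^2(\Omega)$. Hence $(u_k)$ is bounded in $L^2(\Omega)$, and combined with the previously obtained bound on $|u_k|_{BV(\Omega)}$ this proves boundedness in $BV(\Omega)\cap L^2(\Omega)$. The main technical point is really just the lower bound on $M_{\rho_k}$ from \eqref{eq:est_max_rho}; every other ingredient is already in place in Section \ref{sec:a-priori}.
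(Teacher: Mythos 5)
Your proposal is correct and follows essentially the same route as the paper's proof: compare against the feasible competitor $\tilde u = \frac12(u_a+u_b)$, use global optimality and the lower bound on $f$ from \ref{ass:A1} to bound the regularized seminorm and penalty terms, deduce the $BV$-seminorm bound from \ref{ass:B2}, the constraint-violation bound from $M_\rho(x)\ge\frac12\max(0,x)^2$, and conclude via the identity \eqref{eq35}. In fact, you are slightly more careful than the paper at one point: the paper asserts $j_{\epsilon_k,\rho_k}(\tilde u)=f(\tilde u)$ for large $\rho_k$, whereas you correctly account for the extra term $\int_\Omega\psi_{\epsilon_k}(0)\dx=\sqrt{\epsilon_k}\,|\Omega|$, which is harmless since it stays bounded.
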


\begin{proof}
	We introduce the notation
	\[
	j_{\epsilon,\rho}(u):= f(u)+\int_\Omega\psi_\epsilon(\nabla u)\dx+
	\int_\Omega\frac 1{\rho}\left(M_\rho(\rho(u_a-u))+M_\rho(\rho( u-u_b))\right)\dx.
	\
	\]
Set $\tilde u:= \frac12(u_a+u_b) \in H^1(\Omega)$. Then $j_{\epsilon_{k},\rho_k}(\tilde u)=f(\tilde u)$ for $\rho_k$ large enough.
Let $u_k$ be a global minimizer of $j_{\epsilon_{k},\rho_k}$.
This implies
\[
f(u_k)+\int_\Omega\psi_{\epsilon_{k}}(\nabla u_k)\dx
+ \int_\Omega\frac 1{{\rho_k}}\left(M_{\rho_k}({\rho_k}(u_a-u))+M_{\rho_k}({\rho_k}( u-u_b))\right)\dx
\le f(\tilde u).
\]
Since $f$ is bounded from below, there is $K>0$ such that
\[
\int_\Omega\psi_{\epsilon_{k}}(\nabla u_k)\dx
+ \int_\Omega\frac 1{{\rho_k}}\left(M_{\rho_k}({\rho_k}(u_a-u))+M_{\rho_k}({\rho_k}( u-u_b))\right)\dx
\le K.
\]
This proves that $(\nabla u_k)$ is bounded in $L^1(\Omega)$ by \ref{ass:B2}.
By construction, we have
\[
 M_\rho(x) = \int_{-\infty}^x {\max}_\rho(t)\dt \ge  \int_{-\infty}^x \max(t,0)  \dt = \frac12 \max(0,x)^2.
\]
This implies
\[
 \frac {\rho_k}2 \left( \|(u_k-u_b)_+\|_{L^2(\Omega)}^2+\|(u_a-u)_+\|_{L^2(\Omega)} ^2 \right) \le K,
\]
and the boundedness of $(u_k)$ in $L^2(\Omega)$ is now a consequence of identity \eqref{eq35}.
\end{proof}
\section{Optimality condition by regularization} \label{sec4}

Let us assume $\bar u\in BV(\Omega) \cap L^2(\Omega)$ is locally optimal to \eqref{P_BV}.
In this section, we want to show that
there is a sequence of solutions $(u_{\rho,\epsilon})$ of certain regularized problems converging to $\bar u$.
This will allow us to prove optimality conditions for $\bar u$ that are similar to the systems obtained in Theorems \ref{thm:final_prelim} and \ref{thm:final}.
Again, we work under the assumptions \ref{ass:A1}--\ref{ass:A5}.

The solution $\bar u$ satisfies the necessary optimality condition
\begin{equation}\label{eq:OC_lok}
-\nabla f(\bar u)\in \partial\left(\BVSN{\cdot}\right)(\bar u) +N_{U_{ad}}(\bar u) \text{ in } (BV(\Omega)\cap L^2(\Omega))^*,
\end{equation}
see also Theorem \ref{thm_FONC_BV}.
It is easy to see that \eqref{eq:OC_lok} implies that $\bar u$ is the unique solution to the linearized, strictly convex problem
\begin{equation}\label{eq:P_lin}
\min\limits_{u\in BV(\Omega)\cap L^2(\Omega)} \nabla f(\bar u) \cdot u+\BVSN{u}+\frac12\|u-\bar u\|_{L^2(\Omega)}^2 + \delta_{U_{ad}}(u).
\end{equation}

In fact, let $u^*\in BV(\Omega)\cap L^2(\Omega)$ be the solution of \eqref{eq:P_lin}. Then we have the following optimality condition
\[
-\nabla f(\bar u)\in \partial\left(\BVSN{\cdot}\right)(u^*) +(u^*-\bar u) + N_{U_{ad}}(u^*) \text{ in } (BV(\Omega)\cap L^2(\Omega))^*,
\]
which is satisfied by $u^*:=\bar u$.
Let us approximate \eqref{eq:P_lin} by the family of unconstrained convex problems
\begin{equation}\label{eq:P_lin_pen}
\min\limits_{u\in H^1(\Omega)}  \nabla f(\bar u)\cdot u+\int_\Omega\psi_\epsilon(\nabla u)\dx+\frac12\|u-\bar u\|_{L^2(\Omega)}^2 +\frac1\rho\int_\Omega M_\rho(\rho(u_a-u))+M_\rho(\rho(u-u_b))\dx.
\end{equation}
The optimality condition for the unique solution $u_{\epsilon,\rho}$ to \eqref{eq:P_lin_pen} is given by
\begin{equation} \label{eq:P_lin_pen_OC}
\int_\Omega \nabla f(\bar u)v +\psi'_\epsilon(\nabla u_{\epsilon,\rho})\nabla v+(u_{\epsilon,\rho}-\bar u)v
-\lambda^a_{\rho}(u_{\epsilon,\rho})v
+\lambda_{\rho}^b(u_{\epsilon,\rho})v \dx=0.
\end{equation}
for all $v\in H^1(\Omega)$.

\begin{corollary}\label{cor41}
	Suppose $\epsilon_k\searrow0$ and $\rho_k\to\infty$.
	Suppose $(u_{k})$ is the corresponding sequence of global solutions to the penalized problems \eqref{eq:P_lin_pen}.
	Then $(u_k)$ is bounded in $BV(\Omega)\cap L^2(\Omega)$.
\end{corollary}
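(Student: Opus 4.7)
The plan is to mimic the strategy used in the proof of Theorem \ref{lem:conv_case}: compare the objective of \eqref{eq:P_lin_pen} at the global minimizer $u_k$ with its value at a convenient feasible test function for which the penalty terms vanish, and then extract boundedness from the resulting inequality. The natural choice is again $\tilde u:=\tfrac12(u_a+u_b)\in H^1(\Omega)$, which is a constant in $(u_a,u_b)$, so $\nabla\tilde u=0$ and, for $\rho_k$ large enough, $M_{\rho_k}(\rho_k(u_a-\tilde u))=M_{\rho_k}(\rho_k(\tilde u-u_b))=0$. Plugging $\tilde u$ in, the objective reduces to
\[
\nabla f(\bar u)\cdot\tilde u + \sqrt{\epsilon_k}\,|\Omega| + \tfrac12\|\tilde u-\bar u\|_{L^2(\Omega)}^2,
\]
which is bounded uniformly in $k$ by some constant $K>0$.

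Next I would exploit global optimality of $u_k$: since $\int_\Omega\psi_{\epsilon_k}(\nabla u_k)\dx\ge 0$ and both penalty terms are non-negative (because $M_\rho\ge 0$), the optimality inequality yields
\[
\nabla f(\bar u)\cdot u_k + \tfrac12\|u_k-\bar u\|_{L^2(\Omega)}^2 + \int_\Omega\psi_{\epsilon_k}(\nabla u_k)\dx \le K.
\]
The main obstacle is that the linear term $\nabla f(\bar u)\cdot u_k$ has no sign and could \emph{a priori} drive $\|u_k\|_{L^2}$ to infinity. However, the proximal quadratic term $\tfrac12\|u_k-\bar u\|_{L^2}^2$ is strongly convex in $u_k$, so completing the square (or Young's inequality applied to $|(u_k,\nabla f(\bar u)-\bar u)|\le\tfrac14\|u_k\|_{L^2}^2+\|\nabla f(\bar u)-\bar u\|_{L^2}^2$) produces a lower bound of the form $\tfrac14\|u_k\|_{L^2(\Omega)}^2-C$ with $C$ independent of $k$. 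Substituting this into the inequality above delivers the $L^2$-bound on $(u_k)$.

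Finally, the same inequality also bounds $\int_\Omega\psi_{\epsilon_k}(\nabla u_k)\dx$ uniformly. By Lemma \ref{ass:B2} we have $\psi_\epsilon(t)\ge|t|$, hence $\|\nabla u_k\|_{L^1(\Omega)}\le\int_\Omega\psi_{\epsilon_k}(\nabla u_k)\dx$, which is bounded. Since $u_k\in H^1(\Omega)\subset BV(\Omega)$ with $\BVSN{u_k}=\|\nabla u_k\|_{L^1(\Omega)}$, this yields the $BV$-bound, completing the claim that $(u_k)$ is bounded in $BV(\Omega)\cap L^2(\Omega)$. Compared to the proof of Theorem \ref{lem:conv_case}, the strong convexity of the proximal term makes the $L^2$-bound immediate without invoking the identity \eqref{eq35}, so that identity is not needed here; the only delicate point is the Young's-inequality absorption of the linear term.
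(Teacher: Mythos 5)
Your proof is correct, and it follows the overall skeleton the paper intends (the paper's proof of Corollary \ref{cor41} is a one-line reference to the argument of Theorem \ref{lem:conv_case}): compare the objective value at the global minimizer $u_k$ with its value at the constant test function $\tilde u=\frac12(u_a+u_b)$, for which the penalty vanishes once $\rho_k^2\ge 1/(u_b-u_a)$. Where you genuinely deviate is in how the $L^2$-bound is extracted. The paper's template argument discards the smooth part as ``bounded from below'' and then recovers the $L^2$-bound from the penalty term, via $M_\rho(x)\ge\frac12\max(0,x)^2$ combined with the decomposition \eqref{eq35}. You instead keep the proximal term $\frac12\|u_k-\bar u\|_{L^2(\Omega)}^2$, absorb the signed linear term $\nabla f(\bar u)\cdot u_k$ by Young's inequality, and read the $L^2$-bound off strong convexity; the uniform bound on $\int_\Omega\psi_{\epsilon_k}(\nabla u_k)\dx$, and hence on the $BV$-seminorm, then follows by back-substitution. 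Your route buys two things: it avoids \eqref{eq35} and the quadratic-growth estimate for $M_\rho$ altogether, and it makes explicit a point that a literal transcription of the proof of Theorem \ref{lem:conv_case} would gloss over, namely that the smooth part of \eqref{eq:P_lin_pen}, $u\mapsto\nabla f(\bar u)\cdot u+\frac12\|u-\bar u\|_{L^2(\Omega)}^2$, is not covered by assumption \ref{ass:A1} term by term (the linear part alone is unbounded below), and becomes bounded below only after exactly the completing-the-square step you perform. What the paper's route buys in exchange is uniformity: the same argument words for \eqref{Peps_penal} and \eqref{eq:P_lin_pen}, which is what permits the proof by reference.
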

\begin{proof}
The claim follows by a similar argumentation as in the proof of Theorem \ref{lem:conv_case}.
\end{proof}

\begin{lemma}\label{lem42}
	Suppose $\epsilon_k\searrow0$ and $\rho_k\to\infty$. Let $(u_{k})$ be the corresponding sequence of global solutions to the penalized problems \eqref{eq:P_lin_pen}.
	Then $u_k\to\bar u $ in $L^2(\Omega)$,
	and the sequences $(\lambda^a_{\rho_k}(u_k))$ and $(\lambda^b_{\rho_k}(u_k))$
	are bounded in $L^2(\Omega)$.
\end{lemma}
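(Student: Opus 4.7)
The plan is to combine the a-priori bounds from Corollary \ref{cor41} with a minimality argument against a recovery sequence to force every subsequential limit of $(u_k)$ to coincide with the unique minimizer $\bar u$ of the strictly convex problem \eqref{eq:P_lin}.

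First, Corollary \ref{cor41} bounds $(u_k)$ in $BV(\Omega)\cap L^2(\Omega)$, so by Proposition \ref{prop:BV} there is a subsequence (not relabeled) with $u_k\to u^*$ in $L^1(\Omega)$ and $u_k\rightharpoonup u^*$ in $L^2(\Omega)$. Rearranging \eqref{eq:P_lin_pen_OC} puts it into the form \eqref{eq_u_g} with right-hand side $g_k:=-\nabla f(\bar u)-(u_k-\bar u)$, which is bounded in $L^2(\Omega)$ by Corollary \ref{cor41}. Corollary \ref{cor_constr_vio} then forces $(u_k-u_b)_+\to 0$ and $(u_a-u_k)_+\to 0$ in $L^2(\Omega)$, so $u^*\in U_{ad}$, and the argument of Lemma \ref{lem_strong_conv_L2} upgrades the $L^1$-convergence to strong convergence in $L^2(\Omega)$.

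The main step is to show $u^*=\bar u$, which requires a recovery sequence since $\bar u\in BV(\Omega)\cap L^2(\Omega)$ need not lie in $H^1(\Omega)$. Using Proposition \ref{prop:BV}(3), I pick $(w_m)\subset C^\infty(\bar\Omega)$ with $w_m\to\bar u$ in $L^2(\Omega)$ and $\BVSN{w_m}\to\BVSN{\bar u}$, and set $\tilde w_m:=\max(u_a,\min(u_b,w_m))\in H^1(\Omega)\cap U_{ad}$; since the truncation is $1$-Lipschitz and $\bar u\in U_{ad}$, it follows that $\tilde w_m\to\bar u$ in $L^2(\Omega)$ and $\|\nabla \tilde w_m\|_{L^1(\Omega)}\le\BVSN{w_m}$. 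Exploiting the bound $\psi_\epsilon(t)\le\sqrt{\epsilon}+|t|+\epsilon|t|^2$, a diagonal choice $v_k:=\tilde w_{m(k)}$ with $m(k)$ growing slowly enough to absorb $\epsilon_k\|\nabla\tilde w_{m(k)}\|_{L^2(\Omega)}^2$ produces $v_k\in H^1(\Omega)\cap U_{ad}$ with $v_k\to\bar u$ in $L^2(\Omega)$ and $\int_\Omega\psi_{\epsilon_k}(\nabla v_k)\dx\to\BVSN{\bar u}$. Since $v_k\in U_{ad}$ annihilates the penalty in \eqref{eq:P_lin_pen}, global optimality of $u_k$ gives
\[
\limsup_{k\to\infty} j_{\epsilon_k,\rho_k}(u_k) \ \le\ \lim_{k\to\infty} j_{\epsilon_k,\rho_k}(v_k) \ =\ \int_\Omega\nabla f(\bar u)\,\bar u\dx+\BVSN{\bar u}.
\]
Conversely, the bound $\psi_{\epsilon_k}(\nabla u_k)\ge|\nabla u_k|$ from \ref{ass:B2}, combined with Proposition \ref{prop:BV}(2), non-negativity of the penalty, and the strong $L^2$-convergence $u_k\to u^*$, yields
\[
\liminf_{k\to\infty} j_{\epsilon_k,\rho_k}(u_k) \ \ge\ \int_\Omega\nabla f(\bar u)\,u^*\dx+\BVSN{u^*}+\tfrac12\|u^*-\bar u\|_{L^2(\Omega)}^2.
\]
Therefore the objective of \eqref{eq:P_lin} at $u^*\in U_{ad}$ does not exceed its value at $\bar u$, and by uniqueness of the minimizer we conclude $u^*=\bar u$. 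Since every subsequence admits such a sub-subsequence, the entire sequence $(u_k)$ converges to $\bar u$ in $L^2(\Omega)$.

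Finally, applying Lemma \ref{lem_lambda_bounded} to \eqref{eq:P_lin_pen_OC} written as \eqref{eq_u_g} with $g_k$ as above (bounded in $L^2(\Omega)$ by Corollary \ref{cor41}) immediately yields the asserted boundedness of $(\lambda^a_{\rho_k}(u_k))$ and $(\lambda^b_{\rho_k}(u_k))$ in $L^2(\Omega)$. I expect the main technical obstacle to be the construction of the recovery sequence $v_k$: mollification alone does not preserve the box constraints, and the $\epsilon_k|\nabla v_k|^2$ contribution in $\psi_{\epsilon_k}$ requires balancing the mollification scale against $\epsilon_k$, which is the purpose of the truncate-and-diagonalize step.
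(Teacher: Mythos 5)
Your proof is correct, but the key identification step $u^*=\bar u$ takes a genuinely different route from the paper. You share the same opening and closing moves: Corollary \ref{cor41} for the $BV(\Omega)\cap L^2(\Omega)$ bound, rewriting \eqref{eq:P_lin_pen_OC} as \eqref{eq_u_g} with $g_k:=-\nabla f(\bar u)-(u_k-\bar u)$, Lemma \ref{lem_strong_conv_L2} to upgrade $L^1(\Omega)$- to $L^2(\Omega)$-convergence, the subsequence-of-subsequences argument, and Lemma \ref{lem_lambda_bounded} for the multiplier bounds. Where you diverge is the identification of the limit: the paper passes to the limit in the stationarity system via Theorem \ref{thm:final_prelim}, obtaining $-\nabla f(\bar u)-(u^*-\bar u)+\lambda^a-\lambda^b\in\partial(\BVSN{\cdot})(u^*)$, absorbs the multipliers into the normal cone using the complementarity relations \eqref{eq_osys_comp}, and concludes $u^*=\bar u$ by monotonicity of the subdifferential tested against $-\nabla f(\bar u)\in\partial\left(\BVSN{\cdot}+\delta_{U_{ad}}\right)(\bar u)$; you instead argue by energy comparison in the spirit of $\Gamma$-convergence, building a recovery sequence (mollify via Proposition \ref{prop:BV}(3), truncate to $U_{ad}$, diagonalize against $\epsilon_k$), using global optimality of $u_k$ for the limsup bound, lower semicontinuity for the liminf bound, and uniqueness of the minimizer of the strictly convex problem \eqref{eq:P_lin} to force $u^*=\bar u$. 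Your route avoids Theorem \ref{thm:final_prelim} entirely, at the price of the recovery-sequence construction, which the paper never needs; conversely, the paper's choice is economical because Theorem \ref{thm:final_prelim} must be invoked anyway in the proof of Theorem \ref{thm:final_local}, and its identification step uses only stationarity of $u_k$, whereas yours leans twice on global optimality. One small imprecision: feasibility of $v_k$ does not ``annihilate'' the penalty exactly, since $M_{\rho}(x)>0$ for $x\in\left(-\tfrac{1}{2\rho},0\right]$; for $v_k\in U_{ad}$ the penalty is only $O(\rho_k^{-3})$ (pointwise at most $\tfrac1\rho M_\rho(0)=\tfrac{1}{48\rho^3}$ per term), which still vanishes as $k\to\infty$, so your displayed limsup inequality survives unchanged.
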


\begin{proof}
Due to Corollary \ref{cor41}, $(u_k)$ is bounded in $BV(\Omega)\cap L^2(\Omega)$.
Suppose for the moment $u_k \to u^*$ in $L^1(\Omega)$  and $u_k \rightharpoonup u^*$ in $L^2(\Omega)$.
By Lemma \ref{lem_strong_conv_L2} applied to $g_k:=-\nabla f(\bar u) - (u_k -\bar u)$ and $g:=-\nabla f(\bar u) - (u^* -\bar u)$, we obtain $u_k \to u^*$ in $L^2(\Omega)$.
By Theorem \ref{thm310}, the corresponding sequences $(\lambda^a_{\rho_k}(u_k))$ and $(\lambda^b_{\rho_k}(u_k))$
are bounded in $L^2(\Omega)$.
Suppose $\lambda^a_{\rho_k}(u_k) \rightharpoonup \lambda^a$ and $\lambda^b_{\rho_k}(u_k)\rightharpoonup\lambda^b$ in $L^2(\Omega)$.
By Theorem \ref{thm:final_prelim}, we have
\[
-\nabla f(\bar u) - (u^* -\bar u) + \lambda^a - \lambda^b\in \partial(\BVSN{\cdot})(u^*).
	\]
Due to the complementarity conditions \eqref{eq_osys_comp} of Theorem \ref{thm:final_prelim},
we get
\[
	-\nabla f(\bar u) - (u^* -\bar u)  \in  \partial\left(\BVSN{\cdot}+\delta_{U_{ad}}\right)(u^*).
\]
Since $-\nabla f(\bar u) \in \partial\left(\BVSN{\cdot}+\delta_{U_{ad}}\right)(\bar u)$, we have by the monotonicity of the subdifferential
\[
	(-\nabla f(\bar u) - (u^* -\bar u) - (-\nabla f(\bar u)), \ u^*-\bar u)\ge0,
\]
which implies $u^*=\bar u$.

With similar arguments, we can show that every subsequence of $(u_k)$ contains another subsequence that converges in $L^2(\Omega)$ to $\bar u$.
Hence, the convergence of the whole sequence follows.
\end{proof}

This convergence result enables us to prove that $\bar u$ satisfies an optimality condition
similar to those of Theorems \ref{thm:final_prelim} and \ref{thm:final}.

\begin{theorem} 	\label{thm:final_local}
Assume \ref{ass:A1}--\ref{ass:A5}.
Let $\bar u$ be  locally optimal for \eqref{P_BV}.
Then there is
\[
\lambda^* \in \partial\|\cdot\|_{\mathcal{M}{(\Omega)}}(\nabla \bar u) \subset (\mathcal{M}(\Omega)^n)^*
\]
with  $\Div \lambda^*\in L^2(\Omega)$
 such that
	\[
	-\Div \lambda^* -\lambda^a+\lambda^b = \nabla f(\bar u)
	\]
	and
	\[
\begin{aligned}
 \lambda^a & \ge 0, & \lambda^b &\ge0, \\
  (\lambda^a,\,u_a-u^*)&= 0, &   (\lambda^b,\,u^*-u_b)&= 0.
 \end{aligned}
\]

\end{theorem}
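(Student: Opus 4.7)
The strategy is to exploit the fact that the heavy lifting has already been carried out in the preceding corollary and lemma: the regularized problems \eqref{eq:P_lin_pen} are constructed precisely so that their solutions approximate the unique minimizer $\bar u$ of the convex auxiliary problem \eqref{eq:P_lin}, and the optimality system \eqref{eq:P_lin_pen_OC} they satisfy fits the template of \eqref{eq_uk_gk}. I will therefore apply Theorem \ref{thm:final_prelim} with an appropriate choice of right-hand side.

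Concretely, I would pick sequences $\epsilon_k\searrow 0$ and $\rho_k\to\infty$ and let $u_k$ denote the global solution of \eqref{eq:P_lin_pen} for parameters $(\epsilon_k,\rho_k)$. Rewriting \eqref{eq:P_lin_pen_OC} in the form \eqref{eq_uk_gk} identifies the inhomogeneity as
\[
g_k := -\nabla f(\bar u) - (u_k - \bar u).
\]
Lemma \ref{lem42} gives $u_k \to \bar u$ in $L^2(\Omega)$, hence $g_k \to -\nabla f(\bar u)$ strongly in $L^2(\Omega)$, so assumption \eqref{eq_ass_conv} is verified with $g = -\nabla f(\bar u)$. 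Lemma \ref{lem42} also provides the boundedness of the multiplier sequences $(\lambda^a_{\rho_k}(u_k))$ and $(\lambda^b_{\rho_k}(u_k))$ in $L^2(\Omega)$, from which I can extract weakly convergent subsequences $\lambda^a_{\rho_k}(u_k) \rightharpoonup \lambda^a$ and $\lambda^b_{\rho_k}(u_k) \rightharpoonup \lambda^b$.

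With these ingredients in place, Theorem \ref{thm:final_prelim} applies directly to the triple $(u_k,\lambda^a_{\rho_k}(u_k),\lambda^b_{\rho_k}(u_k))$ with limit $u^{*}=\bar u$. It yields simultaneously the sign and complementarity conditions on $\lambda^a,\lambda^b$, the existence of $\mu^{*}\in L^\infty(\Omega)^n$ with $\Div\mu^{*}\in L^2(\Omega)$ and $-\Div\mu^{*}\in\partial(\BVSN{\cdot})(\bar u)$, and finally the existence of $\lambda^{*}\in \partial\|\cdot\|_{\mathcal M(\Omega)}(\nabla \bar u)$ with $\Div \lambda^{*}\in L^2(\Omega)$ satisfying the divergence equation stated in the theorem. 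The complementarity relations transfer to $\bar u$ simply because $u_k\to \bar u$ strongly in $L^2(\Omega)$.

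The main conceptual obstacle was already resolved upstream: obtaining the strong $L^2$-convergence $u_k\to \bar u$ in Lemma \ref{lem42} is what makes $g_k$ converge strongly rather than merely weakly, and the identification $u^{*}=\bar u$ there (via monotonicity of the subdifferential of $\BVSN{\cdot}+\delta_{U_{ad}}$) is what ties the approximate solutions to the particular local minimizer under consideration. Given those results, the present theorem is a direct specialization of Theorem \ref{thm:final_prelim}, and the only remaining work is the bookkeeping needed to match the right-hand side $g$ with $-\nabla f(\bar u)$.
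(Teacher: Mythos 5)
Your proposal is correct and follows essentially the same route as the paper's own proof: define $(u_k)$ as global solutions of \eqref{eq:P_lin_pen}, set $g_k:=-\nabla f(\bar u)-(u_k-\bar u)$, invoke Lemma \ref{lem42} for $u_k\to\bar u$ in $L^2(\Omega)$ and the multiplier bounds, and conclude via Theorem \ref{thm:final_prelim}. Your write-up is in fact slightly more explicit than the paper's (which leaves the extraction of weakly convergent multiplier subsequences and the verification of \eqref{eq_ass_conv} implicit), but there is no substantive difference.
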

\begin{proof}
We define $(u_k)$ as global solutions to the penalized problems \eqref{eq:P_lin_pen} to parameter sequences
	$\epsilon_k\searrow0$ and $\rho_k\to\infty$. Due to Lemma \ref{lem42}, we have $u_k \to\bar u$ in $L^2(\Omega)$.
	Define $g_k:=-\nabla f(\bar u) - (u_k-\bar u)$ and $g:=-\nabla f(\bar u)$.
	Now, the claim follows by Theorem \ref{thm:final_prelim}.
\end{proof}

Clearly, the optimality conditions of Theorem \ref{thm:final_local} are stronger than those of Theorem \ref{thm_FONC_BV}.
However, the proofs above only work on the strong assumptions that the bounds $u_a$ and $u_b$ are constant functions.
Here, it is not clear to us, under which assumptions the above techniques carry over to non-constant $u_a$ and $u_b$.

\section{Numerical tests}\label{sec5}
In this section,  the suggested algorithm is tested with selected examples. To this end,
we implemented  Algorithm \ref{alg1} in python using FEnicCS, \cite {fenicstutorial:1}.
Our examples are carried out in the optimal control setting. In particular, $f$ is given by the reduced tracking type functional
$$f(u):=\frac12\|S(u)-y_d\|_{L^2(\Omega)}^2,$$ where $S$ is the weak solution operator of some elliptic partial differential equation (PDE) specified below.
To solve the partial differential equation, the domain is divided into a regular triangular mesh, and the PDE as well as the control are discretized with piecewise linear finite elements. If not mentioned otherwise, the computations are done on a 128 by 128 grid, which results in a mesh size of $h = 0.022$.

Let us define
$j_{\epsilon,\rho}: H^{1}(\Omega)\to\R$
by
\begin{equation*}
j_{\epsilon,\rho}(u) :=  f(u)+ \int_\Omega\psi_\epsilon(\nabla u)\dx+
\int_\Omega\frac 1{\rho}\left(M_\rho(\rho(u_a-u))+M_\rho(\rho( u-u_b))\right)\dx
\end{equation*}
with $M_\rho$ as defined in \eqref{def_Mrho}. It  is  given in our tests by the specific choice
\[
M_\rho(x):=\begin{cases}
\frac12x^2+\frac{1}{24\rho^2}\quad&\text{ if }x>\frac{1}{2\rho},\\
\frac\rho6(x+\frac1{2\rho})^3&\text{ if }|x|<\frac{1}{2\rho},\\
0&\text{otherwise.}
\end{cases}
\]
let us recall that we use the following function to approximate the $BV$-seminorm:
\[
\psi_{\epsilon}= \sqrt{\epsilon+t^2}+\epsilon t^2.
\]
	Concerning the continuation strategy for the parameters $\epsilon$ and $\rho$ in Algorithm \ref{alg1}, we set $\epsilon_0:=0.5$ in the initialization and decrease $\epsilon$ by factor $0.5$ after each iteration.
	The penalty parameter is increased by factor $2$  after every iteration and is initialized with $\rho_0 := 2$.
	Algorithm \ref{alg1} is stopped if the following termination criterion is satisfied:
	\begin{equation}\label{alg: termination}
	R^\rho_k\le10^{-4}\text{ and } R^\epsilon_{k}\le 10^{-3},
	\end{equation}
	where the residuals $R^\rho_k,\: R^\epsilon_k$ are given by
	\[
	R_k^\rho:=\|(u_a-u_k)_+\|_{L^2(\Omega)}+\|(u_k-u_b)_+\|_{L^2(\Omega)}+(\lambda_k^a,u_a-u_k)+(\lambda_k^b,u_k-u_b).
	\]
	and
	\[
	R^\epsilon_k := \|\nabla u_k\|_{L^1(\Omega)} - \langle \mu_k,\nabla u_k\rangle
	\]
	with
	$
	\mu_k:=\frac{\nabla u_k}{\sqrt{\epsilon_k+|\nabla u_k|^2}}
	$
	as in the proof of Theorem \ref{thm:final_prelim}.
	Here, the residuum $R^\rho_k$ measures the violation of the box-constraints and of the complementarity condition in Theorem \ref{thm:final}.
	Let us discuss the choice of the residuum $R^\epsilon$.
	It can be interpreted as a residual in the subgradient inequality.
	Since $\|\mu_k\|_{L^\infty(\Omega)^n} \le1$, we have $\int_\Omega \mu_k \cdot \nabla v\dx \le \|\nabla v\|_{L^1(\Omega)}$ for $v\in W^{1,1}(\Omega)$.
	Hence, $R^\epsilon_k\ge0$.
	This implies
	\[
	 \langle \mu_k, \ \nabla v - \nabla u_k\rangle \le \|\nabla v\|_{L^1(\Omega)} - \|\nabla u_k\|_{L^1(\Omega)} + R^\epsilon_k \quad \forall v\in W^{1,1}(\Omega).
	\]
	Hence, $\mu_k$ can be interpreted  is an element of the $\varepsilon$-subdifferential to the error level $\varepsilon:=R^\epsilon_k$.

\subsection{Globalized Newton Method for the subproblems}

To solve the variational subproblems of form \eqref{Peps_penal}, i.e.,
\[
\min_{u\in H^1(\Omega)} j_{\epsilon,\rho},
\]
we use a globalized Newton method.
Let us recall the notation
\begin{align*}
&\lambda^a(u):={\max}_{\rho}(\rho(u_a-u)),\\
&\lambda^b(u):={\max}_{\rho}(\rho(u-u_b)).
\end{align*}
and introduce
\[
\Lambda^a(u):=-{\max}_{\rho}'(\rho(u_a-u))=\begin{cases}
-1\quad &\text{if } \rho(u_a-u)>\frac{1}{2\rho},\\
-\rho\left(\rho(u_a-u)+\frac{1}{2\rho}\right)&\text{if } |\rho(u_a-u)|<\frac{1}{2\rho},
\\ 0&\text{otherwise},
\end{cases}
\]
and
$$\Lambda^b(u):={\max}_{\rho}'(\rho(u-u_b))=\begin{cases}
1\quad &\text{if } \rho(u-u_b)>\frac{1}{2\rho},\\
\rho\left(\rho(u-u_b)+\frac{1}{2\rho}\right)&\text{if } |\rho(u-u_b)|<\frac{1}{2\rho}, \\
0&\text{otherwise}.
\end{cases}$$
%
The Newton method with a line search strategy is given as follows:

\begin{algorithm} [Global Newton method]\label{alg:semi_smoothN}
Set $k=0$,  $\rho>0,\:\epsilon\in(0,1)$,  $u_a,u_b\in\R$, $\eta >0,\: p>2$, $\phi\in(0,1),\tau\in(0,\frac12)$.
Choose $u_0\in H^1(\Omega)$.
\begin{enumerate}

\item Compute the search direction $w_k$ by solving
\begin{equation} \label{alg_eq_GS}
 j''(u_{k})w =-\nabla j(u_k)(u),
 \end{equation}
 where
\[
\nabla j(u):= -\Div(\psi_{\epsilon}(\nabla u))+\nabla f(u)-{\max}_{\rho}(\rho(u_a-u))+{\max}_{\rho}(\rho(u-u_b))
\]
and
\[
j''(u)d = -\Div\left(\psi_\epsilon''(\nabla u)\nabla d\right)+f''(u)d -\rho\Lambda^a(u) d+\rho\Lambda^b(u) d.
\]
\\
If  $\nabla j(u_k)\cdot w_k \le -\eta \|w_k\|^p$:  set $w_k:=-\nabla j(u_k)$.
\item (line search) Find $\sigma_k:=\max\{\phi^l:l=0,1,2,...\}$ such that
\[
j(u_k+\sigma_k w_k)-j_k(u_k)\le \tau\sigma_k \nabla J(u_k)\cdot w_{k}.
\]
\item Set $u_{k+1}:=u_k+\sigma w_k$.
\item If a suitable stopping criteria is satisfied: Stop.
\item Set $k:=k+1$ and go to step 1.

\end{enumerate}
\end{algorithm}

Let us provide details regarding the implementation of Algorithm \eqref{alg:semi_smoothN}. 
In the initialization of Algorithm \ref{alg:semi_smoothN}, we set $\phi = 0.5$, $\tau = 10^{-4}$, $\eta = 10^{-8}$ and $p=2.1$.
In addition, we employed the following
 termination criterion:
\[
\text{If } \|u_{k+1}-u_k\|_{L^2(\Omega)}+\|y_{k+1}-y_k\|_{L^2(\Omega)}+\|p_{k+1}-p_k\|_{L^2(\Omega)} < 10^{-10}: \quad \text{ Stop.}
\]
That is, if there is no sufficient change between consecutive iterates, we assume that the method resulted in a stationary (minimal) point of the subproblem.


\subsection{Example 1: linear elliptic PDE}
First, we consider the optimal control problem
\begin{equation}\label{ex1:cnvprob}
\min\limits_{u\in BV(\Omega)} \frac12\|y-y_d\|^2_{L^2(\Omega)}+\beta\BVSN{u}
\end{equation}
subject to
\[
-\Delta y = u \text{ on } \Omega,\: y=0 \text{ on } \partial\Omega
\]
and the box constraints
\[ u_a\le u(x)\le u_b \quad\text{ f.a.a. } x\in\Omega.
\]
Note that \eqref{ex1:cnvprob} as well as the subproblem
\begin{align}\label{ex1:subprob}
\min\limits_{u\in BV(\Omega)} J_{\epsilon,\rho}(y,u) :=  \frac12\|y-y_d\|^2_{L^2(\Omega)}&+\beta\int_\Omega\psi_\epsilon(\nabla u)\dx\notag\\
&+\int_\Omega\frac 1{\rho}\left(M_\rho(\rho(u_a-u))+M_\rho(\rho( u-u_b))\right)\dx\\
\text{s.t. } -\Delta y = u \text{ on } \Omega,&\: y=0 \text{ on } \partial\Omega,\notag
\end{align}
are convex and uniquely solvable.
Let us introduce the adjoint state $p \in H^1_0(\Omega)$ as the solution of the partial differential equation
\[
-\Delta p =y-y_d \text{ on } \Omega,\: y=0 \text{ on } \partial\Omega.
\]
Applying Algorithm \ref{alg:semi_smoothN} to the reduced functional of problem \eqref{ex1:subprob}
 results in the following system of equations that has to be solved in each Newton step:
\[
G(y,p,u)(\delta y,\delta p,\delta u) = F(y,p,u),
\]
where $F$ is given by
\[
F(y,p,u) := \begin{pmatrix}
-\Delta y-u \\ -\Delta p -(y-y_d) \\
p-\beta \Div(\psi'_\epsilon(\nabla u))-{\max}_{\rho}(\rho(u_a-u))+{\max}_{\rho}(\rho(u-u_b))
\end{pmatrix}.
\]
The equation $F(y,p,u)=0$
is the optimality system to problem \eqref{ex1:subprob}.
The derivative of $F$ in direction $(\delta y,\delta p,\delta u)\in H^1_0(\Omega)\times H^1_0(\Omega)\times H^1(\Omega)$ is given by
\[
G(y,p,u)(\delta y,\delta p,\delta u) = \begin{pmatrix}
-\Delta\delta y -\delta u \\ -\Delta \delta p -\delta y\\
\delta p-\beta \Div(\psi''_\epsilon(\nabla u))\nabla \delta u -\rho\Lambda^a \delta u+\rho\Lambda^b \delta u.
\end{pmatrix}.
\]
The solution of the Newton step \eqref{alg_eq_GS} is then given by $w:=\delta u$.

We adapt the example problem data from \cite{ClasonKunisch11}.
Here, $\Omega = [-1,1]^2$ and
$$y_d:= \begin{cases}
1\quad&\text{on }(-0.5,0.5)^2\\
0&\text{otherwise}
\end{cases}.$$  In the computations we set $-u_a = u_b = 10$ and $\beta = 0.0001$.
This example (without additional box constraints) was also used in \cite{hafemeyer2020}.

In Table \ref{tab:conv_rate_Ex1}, we see the convergence behavior of iterates.
Here, the errors
$$E_{u}:= \|u_{k}-u_{ref}\|_{L^2(\Omega)}, \quad E_{J}:=  |J_k-J_{ref}| $$
are presented, where $u_{ref}$ is the final iterate after the algorithm terminated at step $k=19$
and $J_{ref}:=J(u_{ref})$.
Furthermore, we
observe $R^\epsilon = O(\sqrt{\epsilon})$
 and $R^\rho = O(\frac1\rho)$ as $\epsilon \sim 2^{-k}$ and $\rho_k \sim 2^k$.

\begin{table}[htb]
	\centering
	\begin{tabular}{ccccc}
		\hline $k$ & $E_u$ &$E_J$ &$R^\epsilon_k$ & $R^\rho_k$\\
		\hline
		12 & 1.11 & $6.0\cdot 10^{-4}$  &$ 8.0 \cdot 10^{-3} $ &  $1.3\cdot 10^{-9}$ \\
		13 & 0.80 & $3.5\cdot 10^{-4}$ & $ 5.9 \cdot 10^{-3} $ & $6.7\cdot 10^{-10}$\\
		14 & 0.56 & $1.9\cdot 10^{-4}$ & $ 4.2 \cdot 10^{-3}$& $3.4\cdot 10^{-10}$ \\
		15& 0.34 & $1.0\cdot 10^{-4}$ & $ 3.0 \cdot 10^{-3} $ & $1.7\cdot 10^{-10}$ \\
		16 & 0.17 & $5.5\cdot 10^{-5}$ & $2.1 \cdot 10^{-3} $ & $8.3\cdot 10^{-11}$\\
		17 & 0.07 & $2.4\cdot 10^{-5}$ &  $ 1.5 \cdot 10^{-3} $ & $4.2\cdot 10^{-11}$ \\
		18 & 0.02 & $8.2\cdot 10^{-6}$ &  $ 1.1\cdot 10^{-3} $ & $2.1\cdot 10^{-11}$\\
		19 & --- & --- & $ 7.6 \cdot 10^{-4} $ &$1.1\cdot 10^{-11}$ \\
		\hline
		\end{tabular}
\caption{Computed errors during the final iterations.}
\label{tab:conv_rate_Ex1}
\end{table}

Figure \ref{fig:u_lin} shows the optimal control. The result is in agreement with the results obtained in \cite{hafemeyer2020}.
In Figure \ref{fig:lin_ref} the computed optimal controls are depicted for the unconstrained case (left), i.e., constraints are inactive during the computation process.
The right plot shows the optimal control $u$, when lower and upper bound are set to $u_a =-5$ and $u_b= 18$.

\begin{figure}
	\centering
	\begin{subfigure}[b]{0.4\textwidth}
		\centering
		\includegraphics[width=\textwidth]{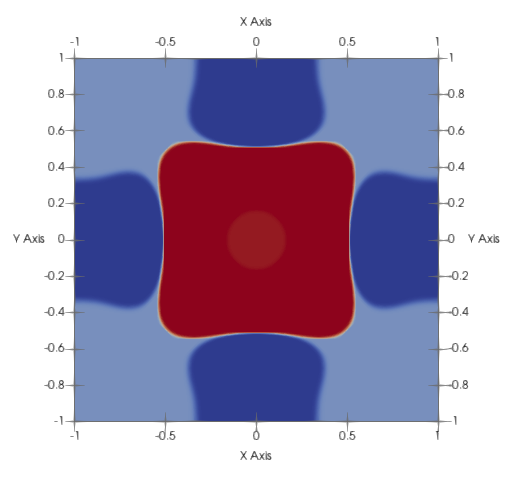}
	\end{subfigure}
\hfill
	\begin{subfigure}[b]{0.5\textwidth}
		\centering
		\includegraphics[width=\textwidth]{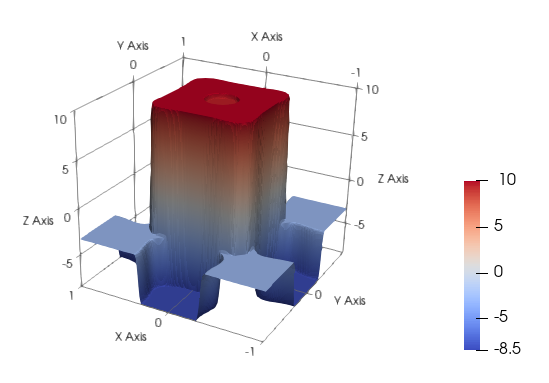}
	\end{subfigure}
	\caption{Optimal control $u$.}%
	\label{fig:u_lin}
\end{figure}

\begin{figure}
	\centering
	\begin{subfigure}[b]{0.4\textwidth}
		\centering
		\includegraphics[width=\textwidth]{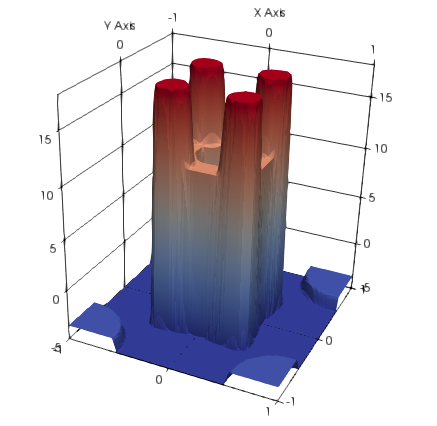}
	\end{subfigure}
\hfill
	\begin{subfigure}[b]{0.5\textwidth}
		\centering
		\includegraphics[width=\textwidth]{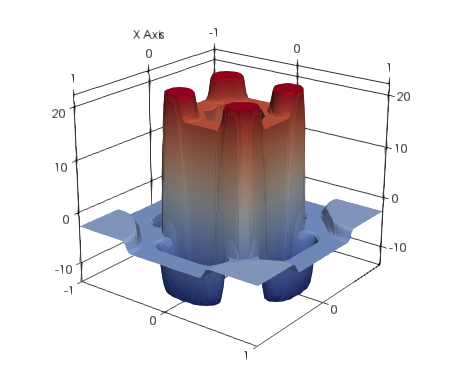}
	\end{subfigure}
	\caption{Optimal control $u$ for different choices of $u_a,u_b$.}%
	\label{fig:lin_ref}
\end{figure}

\subsection{Example 2: Semilinear elliptic optimal control problem}
Let us now consider the following problem with semilinear state equation.
That is, we study the minimization problem
\[
\min_{u\in U_{ad}}f_{sl}(u)+\beta\BVSN{u},
\]
where $f_{sl}$ is given by the standard tracking type functional
$u\mapsto\|y_u-y_d\|^2_{L^2(\Omega)}$,
and $y_u$ is the weak solution of the semilinear elliptic state equation
\[
-\Delta y+  y^3=u \quad\text{in }\Omega,\quad y = 0\quad\text{on }\partial\Omega.
\]
The adjoint state $p\in H^1_0$ is given now as solution to the equation
\[
-\Delta p+3y^2p= y-y_d \quad\text{in }\Omega,\quad p = 0\quad\text{on }\partial\Omega.
\]
For this example, system of equations \eqref{alg_eq_GS} for the state $y\in H^1_0(\Omega)$, the adjoint state $p\in H^1_0(\Omega)$ and the control variable $u\in H^1(\Omega)$ is given by
\[
 G(y,p,u)(\delta y,\delta p,\delta u) = F(y,p,u)
\]
with
\[
F(y,p,u) := \begin{pmatrix}
-\Delta y+y^3-u \\
-\Delta p +3y^2p-(y-y_d) \\
p-\beta \Div(\psi'_\epsilon(\nabla u))-{\max}_{\rho}(\rho(u_a-u))+{\max}_{\rho}(\rho(u-u_b))
\end{pmatrix}.
\]
The derivative in direction $(\delta y,\delta p,\delta u)\in H^1_0(\Omega)\times H^1_0(\Omega)\times H^1(\Omega)$ is given with
\[
G(y,p,u)(\delta y,\delta p,\delta u) = \begin{pmatrix}
-\Delta\delta y +3y^2\delta y-\delta u \\
 -\Delta \delta p +3y^2\delta p-\delta y+6yp\delta y\\
\delta p -\beta \Div(\psi''_\epsilon(\nabla u))\nabla \delta u -\rho\Lambda^a \delta u+\rho\Lambda^b \delta u.
\end{pmatrix}.
\]

The data is given as in Example 1.
The optimal control is depicted in Figure \ref{fig:u_semlin}. It is close to the solution of Example 1.
Let us consider the performance of the algorithm on different levels of discretization for this example.
Table \ref{table:Ex2_iterations} shows the number of outer iterations ($\sharp$it), as well as the total number of newton iterations ($\sharp$newt) needed until the stopping criterion \eqref{alg: termination} holds for increasing meshsizes.
The last column shows the final objective value
 $ J_{\epsilon,\rho}$.
The residuals $R^\epsilon$ and $R^\rho $ behaved as in Example 1.

\begin{table}[htbp]
	\centering
	\begin{tabular}{cccccc}
		\hline
		$h$  & $\sharp$it & $\sharp$newt &$\epsilon_{final }$ & $\rho_{final}$ & $J_{\epsilon,\rho}$ \\
		\hline
\rule{0pt}{1\normalbaselineskip}
		0.088  & 16 & 182 & $ 2^{-16}$& $2^{16}$ & 0.0596 \\
			0.044 &19& 201 & $2^{-19} $& $2^{19}$ & 0.0685 \\

		0.022  & 19& 314   & $2^{-19} $ &$2^{19}$  & 0.0737 \\

		0.011 &19& 486 & $2^{-19} $ & $2^{19}$	& 0.0767\\

		\hline
	\end{tabular}
	\caption{Number of iterations and newton steps for different mesh-sizes.}
	\label{table:Ex2_iterations}
\end{table}

\begin{figure}
	\centering

		\includegraphics[height=6cm]{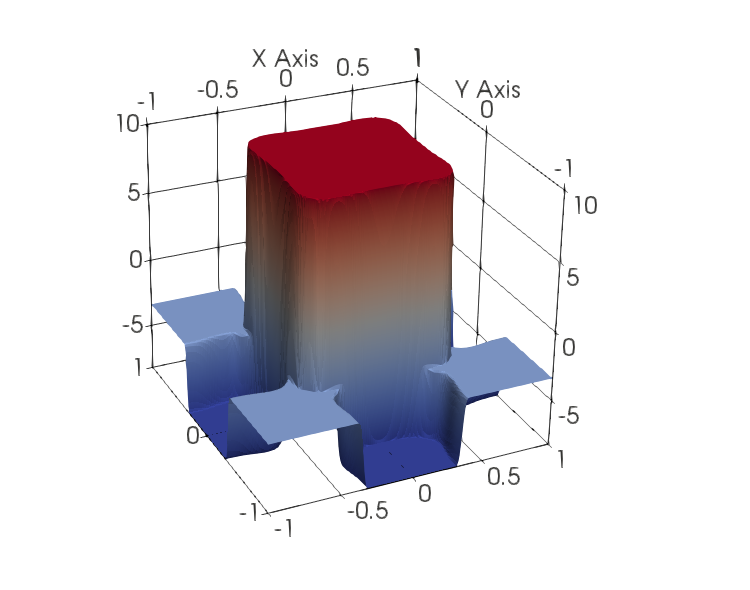}

	\caption{Optimal control $u$ for the semilinear problem.}%
	\label{fig:u_semlin}
\end{figure}

\subsection{Experiments with non-constant constraints}

So far our analysis and numerical experiments are restricted to the case where  $u_a,u_b$ are constant functions.
This assumption was needed to show the boundedness of multipliers $\lambda_k^a(u),\lambda_k^b(u)$ in $L^2(\Omega)$ in Lemma \ref{lem_lambda_bounded},
which is crucial for the final result Theorem \ref{thm:final}.
For this section we tested Algorithm \ref{alg1} also for non-constant functions $u_a,u_b\in L^\infty(\Omega)$.

Here, we consider again the linear optimal control problem and data from Example 1 with different choices for $u_a,u_b$:
	\begin{align}\label{ex1:ub_sin}
(i)\quad	&u_a:= -100, \: u_b(x_1,x_2):=8\sin(\pi x_1)\sin(\pi x_2),\\
(ii)\quad	\label{ex1:ub_cont2}
	&u_a:= -100, \: u_b(x_1,x_2):= -4(x_1-0.5)^2-4x_2^2+10,
	\end{align}

In  Figure \ref{fig_L2_discretization}  the behavior of the quantity
$\|\lambda_k^a(u)\|^2_{L^2(\Omega)}+\|\lambda_k^b(u)\|^2_{L^2(\Omega)}$ is plotted along the iterations, i.e., for increasing $\rho_k$, for different discretization levels.
In Figure \ref{fig:u_nc}, the respective solution plots are shown.
While the multipliers seem to be bounded for one example, their norm grows with $\rho$ (and thus with $\epsilon$) for the other example.
Clearly, more research has to be done to develop necessary and sufficient conditions for the boundedness of the multipliers.

\begin{figure}
	\centering
	\begin{subfigure}[b]{0.45\textwidth}
		\centering
		\includegraphics[width=\textwidth]{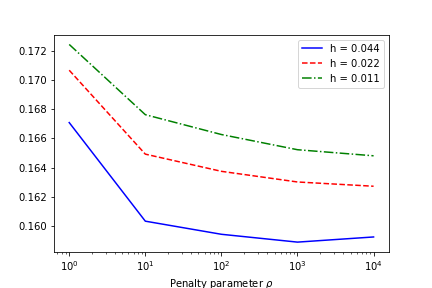}
	\end{subfigure}
	\hfill
	\begin{subfigure}[b]{0.45\textwidth}
		\centering
		\includegraphics[width=\textwidth]{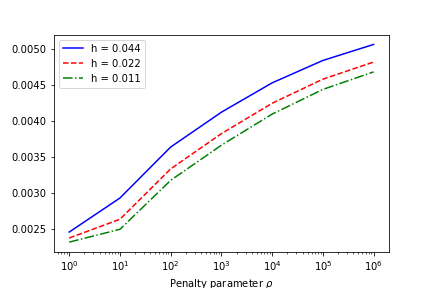}
	\end{subfigure}
	\caption{The $L^2$-norm of multipliers $\lambda^a_k(u)$, $\lambda_k^b(u)$ for Example \eqref{ex1:ub_sin} (left) and \eqref{ex1:ub_cont2} (right).}%
	\label{fig_L2_discretization}
\end{figure}

\begin{figure}
	\centering
	\begin{subfigure}[b]{0.45\textwidth}
		\centering
		\includegraphics[height = 5cm]{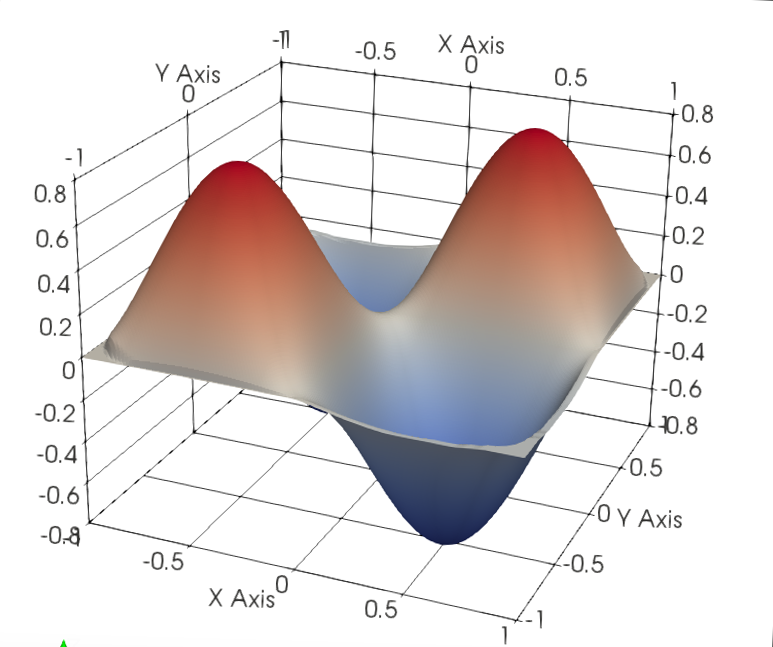}
	\end{subfigure}
	\hfill
	\begin{subfigure}[b]{0.45\textwidth}
		\centering
		\includegraphics[height= 5cm]{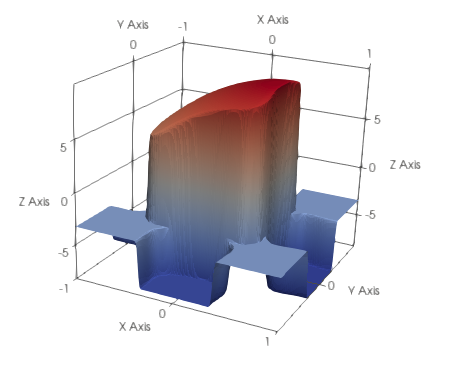}
	\end{subfigure}
	\caption{Optimal control $u$ for Example \eqref{ex1:ub_sin} (left) and \eqref{ex1:ub_cont2} (right).}%
	\label{fig:u_nc}
\end{figure}

\section*{Acknowledgement}
The authors are grateful to Gerd Wachsmuth for an inspiring discussion that led to an improvement of Theorem \ref{thm:final_prelim} and subsequent results.

%
\bibliography{literature}

\begin{thebibliography}{10}

\bibitem{Acar94_BV_approx}
R.~Acar and C.~R. Vogel.
\newblock Analysis of bounded variation penalty methods for ill-posed problems.
\newblock {\em Inverse Problems}, 10(6):1217--1229, 1994.

\bibitem{Attouch2006}
H.~Attouch, G.~Buttazzo, and G.~Michaille.
\newblock {\em Variational analysis in {S}obolev and {BV} spaces}, volume~6 of
  {\em MPS/SIAM Series on Optimization}.
\newblock Society for Industrial and Applied Mathematics (SIAM), Philadelphia,
  PA; Mathematical Programming Society (MPS), Philadelphia, PA, 2006.
\newblock Applications to PDEs and optimization.

\bibitem{Bartels_TV_12}
S.~Bartels.
\newblock Total variation minimization with finite elements: convergence and
  iterative solution.
\newblock {\em SIAM J. Numer. Anal.}, 50(3):1162--1180, 2012.

\bibitem{Milicevic17}
S.~Bartels and M.~Milicevic.
\newblock Iterative finite element solution of a constrained total variation
  regularized model problem.
\newblock {\em Discrete Contin. Dyn. Syst. Ser. S}, 10(6):1207--1232, 2017.

\bibitem{CasasKunisch99_BV}
E.~Casas, K.~Kunisch, and C.~Pola.
\newblock Regularization by functions of bounded variation and applications to
  image enhancement.
\newblock {\em Appl. Math. Optim.}, 40(2):229--257, 1999.

\bibitem{CasasL12012}
E.~Casas, R.~Herzog, and G.~Wachsmuth.
\newblock Optimality conditions and error analysis of semilinear elliptic
  control problems with {$L^1$} cost functional.
\newblock {\em SIAM J. Optim.}, 22(3):795--820, 2012.

\bibitem{Casas_TV_papablic17}
E.~Casas, F.~Kruse, and K.~Kunisch.
\newblock Optimal control of semilinear parabolic equations by {BV}-functions.
\newblock {\em SIAM J. Control Optim.}, 55(3):1752--1788, 2017.

\bibitem{CasasKunisch_ocsl_BV}
E.~Casas and K.~Kunisch.
\newblock Analysis of optimal control problems of semilinear elliptic equations
  by {BV}-functions.
\newblock {\em Set-Valued Var. Anal.}, 27(2):355--379, 2019.

\bibitem{chambolle_TV}
A.~Chambolle, V.~Caselles, D.~Cremers, M.~Novaga, and T.~Pock.
\newblock An introduction to total variation for image analysis.
\newblock In {\em Theoretical foundations and numerical methods for sparse
  recovery}, volume~9 of {\em Radon Ser. Comput. Appl. Math.}, pages 263--340.
  Walter de Gruyter, Berlin, 2010.

\bibitem{overview_TV_2006}
T.~F. Chan, S.~Esedoglu, F.~E. Park, and A.~M. Yip.
\newblock Total variation image restoration: Overview and recent developments.
\newblock In N.~Paragios, Y.~Chen, and O.~D. Faugeras, editors, {\em Handbook
  of Mathematical Models in Computer Vision}, pages 17--31. Springer, 2006.

\bibitem{ClasonKunisch11}
C.~Clason and K.~Kunisch.
\newblock A duality-based approach to elliptic control problems in
  non-reflexive {B}anach spaces.
\newblock {\em ESAIM Control Optim. Calc. Var.}, 17(1):243--266, 2011.

\bibitem{hafemeyer2020}
D.~Hafemeyer and F.~Mannel.
\newblock A path-following inexact {N}ewton method for optimal control in {BV},
  2020.

\bibitem{KinderlehrerStampacchia1980}
D.~Kinderlehrer and G.~Stampacchia.
\newblock {\em An introduction to variational inequalities and their
  applications}, volume~88 of {\em Pure and Applied Mathematics}.
\newblock Academic Press, Inc. [Harcourt Brace Jovanovich, Publishers], New
  York-London, 1980.

\bibitem{Kunisch_Wa_2012}
K.~Kunisch and D.~Wachsmuth.
\newblock Sufficient optimality conditions and semi-smooth {N}ewton methods for
  optimal control of stationary variational inequalities.
\newblock {\em ESAIM Control Optim. Calc. Var.}, 18(2):520--547, 2012.

\bibitem{fenicstutorial:1}
H.~P. Langtangen and A.~Logg.
\newblock {\em Solving {PDE}s in {P}ython}, volume~3 of {\em Simula
  SpringerBriefs on Computing}.
\newblock Springer, Cham, 2016.
\newblock The FEniCS tutorial I.

\bibitem{MilicevicDiss}
M.~Milicevic.
\newblock {\em Finite Element Discretization and Iterative Solution of Total
  Variation Regularized Minimization Problems and Application to the Simulation
  of Rate-Independent Damage Evolutions}.
\newblock PhD thesis, Universit{\"a}t Freiburg, 2018.

\bibitem{TV_rudin_1992}
L.~I. Rudin, S.~Osher, and E.~Fatemi.
\newblock Nonlinear total variation based noise removal algorithms.
\newblock {\em Phys. D}, 60(1-4):259--268, 1992.
\newblock Experimental mathematics: computational issues in nonlinear science
  (Los Alamos, NM, 1991).

\bibitem{SchevenSchmidt2018}
C.~Scheven and T.~Schmidt.
\newblock On the dual formulation of obstacle problems for the total variation
  and the area functional.
\newblock {\em Ann. Inst. H. Poincar\'{e} Anal. Non Lin\'{e}aire},
  35(5):1175--1207, 2018.

\bibitem{schiela_wachsmuth}
A.~Schiela and D.~Wachsmuth.
\newblock Convergence analysis of smoothing methods for optimal control of
  stationary variational inequalities with control constraints.
\newblock {\em ESAIM Math. Model. Numer. Anal.}, 47(3):771--787, 2013.

\bibitem{BOOK_troeltzsch}
F.~Tr\"{o}ltzsch.
\newblock {\em Optimal control of partial differential equations}, volume 112
  of {\em Graduate Studies in Mathematics}.
\newblock American Mathematical Society, Providence, RI, 2010.
\newblock Theory, methods and applications, Translated from the 2005 German
  original by J\"{u}rgen Sprekels.

\end{thebibliography}
\bibliographystyle{plain_abbrv}


\end{document}